\numberwithin{equation}{section}
\newtheorem{corollary}{Corollary}[section]
\newtheorem{lemma}{Lemma}[section]
\newtheorem{theorem}{Theorem}[section]
\newtheorem{definition}{Definition}[section]
\theoremstyle{definition}
\DeclareMathOperator{\D}{\mathbb{D}}
\DeclareMathOperator{\C}{\mathbb{C}}
\DeclareMathOperator{\Arg}{Arg}
\DeclareMathOperator{\dist}{dist}
\begin{document}
\title{Unbounded H\"{o}lder domains}

\author{Christina Karafyllia}  
\address{Department of Mathematics, University of Western Macedonia, Greece}
\email{xristinakrf@gmail.com}

\subjclass[2020]{Primary 30H10, 30C20; Secondary 30F45, 42B30, 30C35}
\date{\today}
\keywords{Hardy number, H\"{o}lder domain, hyperbolic metric}

\begin{abstract} Motivated by the classical bounded H\"{o}lder domains, we introduce the notion of an unbounded simply connected H\"{o}lder domain. We prove analytic and  geometric characterizations of those domains with the aid of the spherical metric and the hyperbolic metric. We also study the relation of our definition to the definition of classical bounded H\"{o}lder domains. It turns out that unbounded H\"older domains form a natural class of domains for the study of the Hardy number (which determines the Hardy spaces to which the corresponding Riemann mapping belongs to). As an application of our characterizations, we prove a sharp bound for the Hardy number of an unbounded H\"{o}lder domain. 
\end{abstract}

\maketitle

\section{Introduction}\label{int}

Let $D$ be a bounded simply connected domain in the complex plane $\C$ and $f$ be a Riemann mapping from the unit disk $\D$ onto $D$. We say that $D$ is a H\"{o}lder domain, as introduced by Becker--Pommerenke \cite{BecPom}, if $f$ satisfies the H\"{o}lder condition. Specifically, for $0<\alpha\le 1$, $D$ is an $\alpha$-H\"{o}lder domain if there is a constant $K>0$ such that, for every $z_1,z_2\in \D$,
\begin{equation}\label{def1}
|f(z_1)-f(z_2)|\le K |z_1-z_2|^{\alpha}.
\end{equation}
The main result of Becker--Pommerenke shows that there is a geometric characterization of H\"{o}lder domains with the aid of the hyperbolic distance $h_D$ in $D$. Namely, if we fix $w_0 \in D$, then $D$ is an $\alpha$-H\"{o}lder domain if and only if there is a constant $C$ such that, for every $w\in D$,
\begin{equation}\label{def3}
h_D(w_0,w)\le C+\frac{1}{\alpha}\log \frac{\dist (w_0,\partial D)}{\dist (w,\partial D)}.
\end{equation}
Here, $\dist(z,A)$ denotes the Euclidean distance from a point $z\in \C$ to a set $A\subset \C$. This result was later improved by Smith--Stegenga in \cite{Smith2}. Moreover, N\"{a}kki--Palka established geometric conditions for a conformal mapping of a simply connected domain onto $\D$ (or its inverse) to be H\"older continuous, involving modulus \cite{NP}, hyperbolic distance \cite{NP2} or quasiconvexity \cites{NP3}.

Subsequently, the notion of H\"older domains has been extended to general domains. To define the property of a H\"{o}lder domain in a domain of arbitrary connectivity, we need the notion of the quasi-hyperbolic distance. Let $D$ be a proper subdomain of $\C$. The quasi-hyperbolic distance between two points $w_1,w_2\in D$ is defined by
\[k_D (w_1,w_2)=\inf_{\gamma} \int_{\gamma }\frac{|dw|}{\dist (w,\partial D)},\]
where the infimum is taken over all rectifiable paths $\gamma$ in $D$ connecting $w_1$ and $w_2$. It is well-known that $k_D$ is comparable to the hyperbolic distance $h_D$ when $\partial D$ is uniformly perfect (see \cites{BeaPom}). Now, we fix $w_0\in D$. Following Smith--Stegenga \cite{Smith1}, $D$ is a H\"{o}lder domain if there are constants $c_1\in \mathbb R$ and $c_2>0$ such that, for every $w\in D$,
\begin{align}\label{def_qh}
k_D(w_0,w)\le c_1+ c_2 \log\frac{\dist (w_0,\partial D)}{\dist (w,\partial D)}.
\end{align}
 As a consequence of that definition, H\"older domains must be bounded \cite{Smith1}.

H\"older domains appear naturally in complex dynamics \cite{GS}, in SLE \cite{RS}, and in random trees \cite{RS}. Although the domains appearing in these settings can be unbounded, their boundary is bounded; hence, by restricting to a bounded subdomain contained in a large ball, one can verify that the above H\"older domain condition is satisfied. On the other hand, such techniques cannot be applied in the case of an unbounded domain with \textit{unbounded boundary}. The purpose of the current work is to extend the definition of H\"older domains to this setting and develop the machinery for their study. Domains with unbounded boundary are very natural for the study of the Hardy number, which we discuss in the end of this section. 

Moreover, H\"older domains contain the well-studied class of John domains. John domains were introduced by F.\ John \cite{Jo} and studied by Martio--Sarvas \cite{MS}. Later, Nakki--V\"ais\"al\"a \cite{NV} introduced the notion of unbounded John domains and they observed that the most subtle case of such domains is that of unbounded boundary, as in our setting; see \cite{NV}*{\S 2.25}. 

We now give the definition of an unbounded simply connected H\"older domain. For this purpose, we substitute the Euclidean distance in \eqref{def1} with the spherical distance $\sigma$.

\begin{definition} Let $D\subsetneq \C$ be an unbounded simply connected domain and $f$ be a Riemann mapping from $\D$ onto $D$. We say that $D$ is an unbounded H\"{o}lder domain if there exist $\alpha\in (0,1]$ and $K>0$ such that, for every $z_1,z_2\in \D$,
\begin{equation}\label{definition}
\sigma (f(z_1),f(z_2))\le K|z_1-z_2|^{\alpha}.
\end{equation}
In this case, we say that $D$ is an unbounded $\alpha$-H\"older domain. 
\end{definition}

Henceforth, let $D\subsetneq \C$ be an unbounded simply connected domain and $f$ be a Riemann mapping from $\D$ onto $D$. Our first characterization involves the spherical derivative $f^\#$. 
 
\begin{theorem}
Let $0<\alpha\le 1$. Then $D$ is an unbounded $\alpha$-H\"{o}lder domain if and only if there is a constant $M>0$ such that, for every $z\in \D$,
\[f^{\#}(z)\le \frac{M}{(1-|z|)^{1-\alpha}}.\] 
\end{theorem}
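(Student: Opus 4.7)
The plan is to prove the two implications separately, each modeled on the classical Hardy--Littlewood theorem characterizing Hölder continuity by derivative growth, but with Euclidean distance on the target replaced by the spherical distance $\sigma$. The reverse implication (derivative bound implies Hölder) is a routine path integration. The forward implication (Hölder implies derivative bound) is the main obstacle: expressing $f^{\#}(z_0)$ as $\lim_{w\to z_0}\sigma(f(z_0),f(w))/|z_0-w|$ and applying the Hölder bound gives only $\lim K|z_0-w|^{\alpha-1}$, which diverges for $\alpha<1$. The way around this is to pre-compose $f$ with a spherical rotation sending $f(z_0)$ to $0$ and then apply a Cauchy estimate on a disk of radius comparable to $1-|z_0|$.

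\textbf{Reverse direction.} Assume $f^{\#}(z)\le M/(1-|z|)^{1-\alpha}$. For any rectifiable path $\gamma\subset\D$ from $z_1$ to $z_2$,
\[\sigma(f(z_1),f(z_2))\le\int_\gamma f^{\#}(z)\,|dz|,\]
so it suffices to bound the right-hand side by a constant times $|z_1-z_2|^{\alpha}$. Set $d=|z_1-z_2|$ and $\rho=1-\max(|z_1|,|z_2|)$. When $d\le\rho$, I take $\gamma$ the straight segment: $|z|\le\max(|z_1|,|z_2|)$ along it, hence $1-|z|\ge\rho\ge d$, and the integral is at most $Md/\rho^{1-\alpha}\le Md^{\alpha}$. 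When $d>\rho$ and $d<1$, I replace $\gamma$ by an L-shape: retract each endpoint (if needed) radially to the circle $|z|=1-d$ and connect the retractions by a straight segment. The radial pieces contribute at most $\int_{1-d}^{|z_j|}M(1-r)^{\alpha-1}\,dr \le Md^{\alpha}/\alpha$ each, and the middle segment has length $\lesssim d$ with $1-|z|\ge d$ along it, yielding another $O(d^{\alpha})$. The case $d\ge 1$ is absorbed by the trivial bound $\sigma\le\pi\le\pi d^{\alpha}$.

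\textbf{Forward direction.} Fix $z_0\in\D$ and let $R(w)=(w-f(z_0))/(1+\overline{f(z_0)}\,w)$, a rotation of the Riemann sphere sending $f(z_0)$ to $0$. Since spherical rotations are isometries of $\sigma$, the composition $g:=R\circ f$ is meromorphic on $\D$ with $g^{\#}\equiv f^{\#}$; moreover $g(z_0)=0$ gives $g^{\#}(z_0)=|g'(z_0)|$. The Hölder hypothesis then becomes $\sigma(0,g(w))\le K|z_0-w|^{\alpha}$ for $w\in\D$. Set $r=(1-|z_0|)/2$; restricting first to those $z_0$ for which $Kr^{\alpha}\le\pi/4$, the inequality forces $g(\{|w-z_0|\le r\})$ to lie in the spherical disk of radius $\pi/4$ about $0$, so in particular $g$ is holomorphic on this disk and the comparability $|g(w)|\asymp\sigma(0,g(w))$ there yields $|g(w)|\le CKr^{\alpha}$. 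Cauchy's estimate on $|w-z_0|=r$ then gives
\[f^{\#}(z_0)=|g'(z_0)|\le\frac{1}{r}\max_{|w-z_0|=r}|g(w)|\le\frac{CK}{r^{1-\alpha}}\le\frac{M}{(1-|z_0|)^{1-\alpha}}.\]
For the remaining $z_0$ (which form a compact subset of $\D$) the continuous function $f^{\#}$ is bounded, and the estimate holds trivially after enlarging $M$.
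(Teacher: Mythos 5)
Your proposal is correct, and both halves take routes genuinely different from the paper's. For the easy direction, the paper integrates $f^{\#}$ along the hyperbolic geodesic through $z_1,z_2$ and runs a dyadic-partition estimate on that geodesic, whereas you integrate along explicit Euclidean L-shaped paths (radial retraction to the circle $|z|=1-|z_1-z_2|$ plus a chordal segment); your bookkeeping checks out and is arguably more elementary. For the hard direction the difference is more substantial: the paper first converts the spherical Hölder condition into a weighted Euclidean one via the chordal comparison, then argues by cases according to whether $0\in D$ and whether $z$ is near $f^{-1}(0)$, using Cauchy's integral formula together with Koebe distortion and the fact that $0$ is omitted by $f$ on small disks to control the ratio $\sqrt{1+|f(\cdot)|^2}/\sqrt{1+|f(z)|^2}$; in particular it exploits univalence. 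You instead exploit the invariance of $\sigma$ and of the spherical derivative under rotations of the sphere to recenter $f(z_0)$ at $0$ and reduce everything to a single Cauchy estimate on the disk $D\left(z_0,\tfrac{1-|z_0|}{2}\right)$ — no Koebe, no univalence, so your argument applies to arbitrary meromorphic functions satisfying the spherical Hölder bound. Two small points: with the paper's normalization $f^{\#}=2|f'|/(1+|f|^2)$ you have $g^{\#}(z_0)=2|g'(z_0)|$, not $|g'(z_0)|$ (harmless, absorbed into $M$); and your compactness step on the set where $Kr^{\alpha}>\pi/4$ makes $M$ depend on $f$ rather than quantitatively on $K,\alpha$ (and the domain), which the statement permits but the paper emphasizes — this is easily repaired by running the same rotation-plus-Cauchy estimate there with the radius $\min\bigl\{\tfrac{1-|z_0|}{2},(\pi/4K)^{1/\alpha}\bigr\}$, which in fact yields a constant depending only on $K$ and $\alpha$, cleaner than the paper's $90K(1+\dist(0,\C\backslash D))$.
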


It is a well-known theorem of Hardy--Littlewood (see \cite{Gol}*{p.~411}) that the same condition with the classical derivative $f'$ in the place of $f^\#$ characterizes the classical (bounded) H\"older domains. We provide the proof of the above theorem in Section \ref{defderivative}.

In Section \ref{defhyperbolic}, we establish a more geometric characterization in terms of the hyperbolic distance, which is the analogue of (\ref{def3}) for unbounded H\"older domains. We set $\dist_\sigma(z,A)$ be the spherical distance from a point $z$ to a set $A$. Fix a point $w_0\in D$.

\begin{theorem}
Let $0<\alpha\leq 1$. Then $D$ is an unbounded $\alpha$-H\"older domain if and only if there exists a constant $C$ such that, for every $w\in D$,
\[h_D(w_0,w)\le C+\frac{1}{\alpha}\log\frac{1}{\dist_\sigma(w,\partial D)}.\]
\end{theorem}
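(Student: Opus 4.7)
The plan is to reduce the theorem to the spherical-derivative characterization from Theorem 1.1 via a Koebe-type distortion estimate in the spherical metric. Without loss of generality I would take $w_0 = f(0)$, since changing the base point alters both sides of the claimed inequality by an additive constant that can be absorbed into $C$. For $w = f(z)$, conformal invariance of the hyperbolic metric gives
\[
h_D(w_0, w) = h_\D(0, z) = \log\frac{1+|z|}{1-|z|},
\]
which is comparable, up to bounded additive error, to $\log\frac{1}{1-|z|}$.

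The key intermediate ingredient is the spherical analogue of the Koebe distortion theorem: for the Riemann map $f\colon \D \to D$ and every $z \in \D$,
\[
\dist_\sigma(f(z), \partial D) \asymp f^{\#}(z)(1-|z|^2).
\]
This follows from the Euclidean Koebe one-quarter and distortion theorems combined with the comparison $\sigma(a,b) \asymp |a-b|/\sqrt{(1+|a|^2)(1+|b|^2)}$ for close points, noting that in the unbounded case the estimate $\sigma(w,\infty) = 1/\sqrt{1+|w|^2}$ must also be taken into account. The hard part of the proof lies precisely here: one must verify the two-sided estimate uniformly regardless of whether the nearest boundary point is Euclidean-near $w$ or effectively at infinity, using the simple connectivity of $D$.

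With this distortion lemma in hand, both directions reduce to symmetric manipulations of logarithms. If $D$ is an unbounded $\alpha$-H\"older domain, Theorem 1.1 yields $f^\#(z)(1-|z|) \lesssim (1-|z|)^{\alpha}$, hence $\dist_\sigma(w,\partial D)\lesssim (1-|z|)^\alpha$; taking $\frac{1}{\alpha}\log\tfrac{1}{\cdot}$ and combining with $h_D(w_0,w) \leq \log\tfrac{1}{1-|z|}+\log 2$ gives the desired hyperbolic inequality. Conversely, assuming the hyperbolic bound and using $h_D(w_0,w) \geq \log\tfrac{1}{1-|z|}$ forces $\dist_\sigma(w,\partial D)\lesssim (1-|z|)^\alpha$, whereupon the lower bound in the spherical Koebe estimate yields $f^\#(z) \lesssim (1-|z|)^{\alpha-1}$, and Theorem 1.1 delivers the Hölder property. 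The only real subtlety beyond the distortion lemma is making sure that the estimate at $z=0$ (where $h_D(w_0,w)=0$ but $\dist_\sigma(w,\partial D)$ may be small or large) is absorbed into the constant $C$.
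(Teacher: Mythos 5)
Your proposal is correct and takes essentially the same route as the paper: both directions are reduced to the spherical-derivative characterization (Theorem \ref{hoderivative}), and your ``spherical Koebe'' comparison $\dist_\sigma(f(z),\partial D)\asymp f^{\#}(z)(1-|z|^2)$ is precisely the content of the paper's case analysis in (iii)$\Rightarrow$(i) (nearest chordal boundary point with $|w|\ge 2|f(z)|$ versus $|w|\le 2|f(z)|$), where the $\gtrsim$ half holds only with a constant depending on $\dist(0,\C\backslash D)$ and the $\lesssim$ half uses that $\partial D$ is automatically unbounded for an unbounded simply connected $D\subsetneq\C$. The only cosmetic difference is that for (i)$\Rightarrow$(iii) the paper integrates $f^{\#}$ along radii to obtain the stronger geodesic-length condition (C1) instead of your pointwise upper bound.
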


As an analogue to a result of Smith--Stegenga in \cite{Smith2} we also prove that $D$ is an unbounded $\alpha$-H\"older domain if and only if there is a constant $C$ such that, for every prime end $\tilde w$ of $D$ and every point $w$ on the hyperbolic geodesic $\Gamma$ from $w_0$ to $\tilde w$ in $D$,
\[h_D(w_0,w)\le C+\frac{1}{\alpha}\log\frac{1}{l_\sigma(\Gamma(w,\tilde w))}.\]
Here, $l_\sigma(\Gamma(w,\tilde w))$ denotes the spherical length of the subarc of $\Gamma$ joining $w$ to $\tilde{w}$. More details are given in Section \ref{defhyperbolic}.

In Section \ref{defholderbounded}, we complete the basic theory of unbounded H\"{o}lder domains, showing the relation of our definition to the definition of classical (bounded) H\"{o}lder domains. For this purpose, we set $r=\dist (w_0,\partial D)/2$ and we consider the function  
\[g(w)=\frac{r}{w-w_0},\]
for $w\in D$. If we restrict ourselves to the doubly connected bounded domain $D_0=g(D)\cap D(0,R)$, for an appropriate large $R>0$, then we prove the following result.

\begin{theorem} $D$ is an unbounded H\"{o}lder domain if and only if $D_0$ is a H\"{o}lder domain according to definition \eqref{def_qh}. 
\end{theorem}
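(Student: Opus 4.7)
The plan is to prove the equivalence by combining the hyperbolic characterization of unbounded H\"older domains (Theorem 2) with the quasi-hyperbolic definition \eqref{def_qh} of classical H\"older domains. The key observation is that $g$ is a M\"obius transformation, hence a spherical isometry of $\hat\C$; moreover, with $D^*:=D\cap\{w:|w-w_0|>r/R\}$, $g$ restricts to a conformal bijection $g\colon D^*\to D_0$, so that $h_{D^*}=h_{D_0}\circ g$ identically.

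First, I would record the geometry of $D_0$. Since $g$ exchanges $w_0$ and $\infty$, the image $g(D)$ is a simply connected domain of $\hat\C$ containing $\infty$ with $0\in\partial g(D)$; hence $g(D)\cap\C$ is an unbounded doubly connected domain with compact boundary, and for $R$ large enough that $\partial g(D)\subset D(0,R)$, the set $D_0$ is a bounded doubly connected domain with boundary $\partial g(D)\cup\partial D(0,R)$: an \emph{inner} continuum (containing $0$) and a smooth \emph{outer} circle. Both components are nondegenerate continua, so $\partial D_0$ is uniformly perfect and $k_{D_0}\asymp h_{D_0}$; in particular condition \eqref{def_qh} may be restated in terms of $h_{D_0}$.

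Next, I would set up the metric dictionary. For $z=g(w)\in D_0$, the identity $\dist_\sigma(w,\partial D)=\dist_\sigma(z,\partial g(D))$ holds by spherical isometry; and since $z$ lies in the bounded set $D_0$, this spherical distance is comparable, with constants depending only on $R$, to $\dist(z,\partial g(D))$. Combined with the decomposition $\dist(z,\partial D_0)=\min\{\dist(z,\partial g(D)),\,R-|z|\}$, this provides the required comparison between $\dist_\sigma(w,\partial D)$ and $\dist(z,\partial D_0)$ as long as $z$ stays away from $\partial D(0,R)$. For the hyperbolic metric, $h_{D^*}=h_{D_0}\circ g$ is an exact identity, and standard Schwarz--Pick estimates show that removing $\overline{D(w_0,r/R)}$ from $D$ alters the hyperbolic distance only by additive $O(1)$ errors on subregions kept away from that disk. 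Fixing basepoints $w_0'\in\partial D(w_0,r)$ and $z_0:=g(w_0')\in D_0$ (so that $|z_0|=1$), one deduces
\[
|h_D(w_0,w)-h_{D_0}(z_0,g(w))|=O(1)
\]
for every $w\in D^*$ with $|w-w_0|\ge 2r/R$.

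With this dictionary in hand, both directions reduce to case-splitting on $w\in D$. When $z=g(w)$ is close to the inner boundary of $D_0$, equivalently $w$ is spherically close to $\partial D$, the two H\"older inequalities translate term-by-term with H\"older exponent $\alpha$. When $z$ is close to the outer circle $\partial D(0,R)$, equivalently $|w-w_0|$ is close to $r/R$, the unbounded H\"older inequality for $D$ holds trivially, because $h_D(w_0,w)$ is bounded and $\dist_\sigma(w,\partial D)$ is bounded below, while on the $D_0$ side the smooth outer boundary forces $h_{D_0}(z_0,z)\le\log(1/(R-|z|))+O(1)$, which is absorbed in \eqref{def_qh} with any $c_2\ge 1$. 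This yields both implications. The main technical obstacle is the hyperbolic comparison in the dictionary step: one must verify quantitatively that, on the relevant sub-regions, passing from the simply connected $D$ to the annular $D^*$ (and, through $g$, from the unbounded annular $g(D)\cap\C$ to the bounded $D_0$) introduces only additive $O(1)$ errors. I would handle this via Koebe distortion applied to Riemann maps onto $D^*$ and $D_0$, in tandem with the Beardon--Pommerenke comparison of $h$ and $k$ on uniformly perfect boundaries already cited in the paper.
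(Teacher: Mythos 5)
Your overall strategy (reduce to the hyperbolic characterization of Theorem \ref{hyho}, transport everything through $g$, and use uniform perfectness of $\partial D_0$ to pass between $k_{D_0}$ and $h_{D_0}$) is close in spirit to the paper's proof, but two of your key claims are not correct as stated, and one of them is exactly where the real work lies. First, $g(w)=r/(w-w_0)$ is a M\"obius transformation but it is \emph{not} a spherical isometry: translations and dilations do not preserve the chordal metric, and only sphere rotations do. So the identity $\dist_\sigma(w,\partial D)=\dist_\sigma(g(w),\partial D')$ is false in general; what is true (and what the paper proves as Lemma \ref{coro}, by a genuine computation) is two-sided comparability with constants depending on $r$ and $|w_0|$. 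By contrast, your hyperbolic identity $h_{D^*}=h_{D_0}\circ g$ is fine, since the hyperbolic metric is invariant under all M\"obius maps of the sphere. The isometry claim is repairable, but as written your ``metric dictionary'' has no proof of its spherical half.

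Second, and more seriously, the estimate $|h_D(w_0,w)-h_{D_0}(z_0,g(w))|=O(1)$ away from the removed disk does not follow from ``standard Schwarz--Pick estimates.'' Schwarz--Pick (domain monotonicity) only gives $h_D\le h_{D^*}=h_{D_0}\circ g$; the reverse inequality, i.e.\ bounding $h_{D_0}$ (equivalently $k_{D_0}$) from above by $h_D$ plus controlled errors, is precisely the hard direction needed to show that $D$ unbounded H\"older implies $D_0$ H\"older, and it is the main technical content of the paper's argument (the density bound $\lambda_{D'}\delta_{D'}\ge 1/300$ of Lemma \ref{lemmahyqua} together with the explicit geodesic surgery along $C(0,2)$ in Cases 1--2). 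Note also that the paper only obtains a bound of the form $k_{D_0}(z^*,z)\le 300\,h_{D'}(z^*,z)+\text{const}(+\log(1/\dist(z,\partial D_0)))$, i.e.\ multiplicative-plus-additive, not the purely additive $O(1)$ you assert; an additive comparison is stronger than what your cited tools deliver (a pointwise bound $\lambda_{D^*}\le C\lambda_D$ on $\{|y-w_0|\ge r/2\}$, which does hold via the Lipschitz property of $\delta_D$, only yields a multiplicative constant after rerouting the geodesic around the removed disk). Fortunately the theorem does not preserve the exponent $\alpha$, so a multiplicative comparison suffices --- but then your later assertion that the two H\"older inequalities ``translate term-by-term with exponent $\alpha$'' must also be weakened. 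In short: the skeleton is workable and the uniform-perfectness shortcut for $k_{D_0}\asymp h_{D_0}$ is a legitimate alternative to Lemma \ref{lemmahyqua}, but as written the proposal assumes, rather than proves, both halves of the dictionary (the spherical comparison of Lemma \ref{coro} and the hyperbolic/quasi-hyperbolic comparison carried out in the paper's case analysis), and the justifications you give for them are incorrect.
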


We note that the constants appearing in the above statements depend quantitatively on each other and on the domain $D$. We formulate the results and the dependence of the various constants more precisely in the corresponding sections. The quantitative nature of the results poses some further difficulties in the proofs. 

Finally, as an application of our results, in Section \ref{application} we study the Hardy number of unbounded $\alpha$-H\"{o}lder domains and its relation to $\alpha$. The Hardy space ${H^p}\left( \mathbb{D} \right)$, where $p>0$, is defined to be the set of all holomorphic functions, $f$, on $\mathbb{D}$ that satisfy the condition 
\[\mathop {\sup }\limits_{0 < r < 1} \int_0^{2\pi } {{{| {f( {r{e^{i\theta }}} )} |}^p}d\theta  <  \infty }\]
(see \cite{Dur}). The fact that a function $f$ belongs to some space ${H^p}\left( \mathbb{D} \right)$ imposes a restriction on its growth and this restriction is stronger as $p$ increases. In other words, if $p>q$ then $H^p (\mathbb{D}) \subset H^q (\mathbb{D})$. In \cite{Han} Hansen studied the problem of determining the numbers $p$ for which $f \in {H^p}\left( \mathbb{D} \right)$ by studying $f\left( \mathbb{D} \right)$. For this purpose, he introduced a number which he called the Hardy number of a region. Since we study simply connected domains, we state the definition only in this case. Let $D\subsetneq \mathbb{C}$ be a simply connected domain and $f$ be a Riemann mapping from $\mathbb{D}$ onto $D$. The Hardy number of $D$ is defined by 
\[{\rm h} ( D ) = \sup \left\{ {p > 0:f \in {H^p}\left( \mathbb{D} \right)} \right\}.\]
This definition is independent of the choice of the Riemann mapping onto $D$. Since $D$ is simply connected, ${\rm h}(D)$ takes values in $[1/2, \infty]$. If $D$ is bounded then ${\rm h}(D)=\infty$ and thus we are interested in the case $D$ is unbounded. 

A classical problem in geometric function theory is to estimate the Hardy number of a plane domain. In this direction, Hansen \cite{Han} gave a lower bound for the Hardy number of an arbitrary plane domain and improved this bound for simply connected domains. Kim and Sugawa \cite{Kim} gave a description for the Hardy number of a plane domain in terms of harmonic measure and the current author established a characterization for the Hardy number of simply connected domains involving hyperbolic distance \cite{Kar}. However, we know more precise estimates for the Hardy number of certain types of domains such as starlike \cite{Han}, spiral-like \cite{Han2}, comb domains \cite{Karfin}, image regions of K{\oe}nigs mappings \cite{Co2}, etc.

In \cite{Kim} Kim and Sugawa studied the Hardy number of unbounded quasidisks, which furnish a subclass of unbounded H\"{o}lder domains, and gave sharp lower and upper bounds for this. Applying the theory developed before, we also establish a sharp upper bound for the Hardy number of an unbounded $\alpha$-H\"{o}lder domain in terms of $\alpha$.

\begin{theorem}\label{the14em} Let $D\subsetneq \C$ be an unbounded simply connected domain and $0<\alpha\leq 1$. If $D$ is an unbounded $\alpha$-H\"{o}lder domain, then ${\rm h}(D)\le 1/\alpha$ and this estimate is sharp.
\end{theorem}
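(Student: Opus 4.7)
The plan is to exploit the spherical Hölder condition to pin down a point on $\partial \D$ where $f$ blows up, convert the spherical smallness there into a pointwise lower bound for $|f|$, and then show that this bound forces the $H^p$ integral means to diverge once $p > 1/\alpha$. Sharpness will follow by exhibiting an angular sector of opening $\pi\alpha$ whose conformal map realizes equality.

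First I would observe that the Hölder condition \eqref{definition} makes $f$ spherically uniformly continuous on $\D$, so it extends to a continuous map $\tilde f : \overline{\D} \to \widehat{\C}$. The compact set $\tilde f(\overline{\D})$ contains the unbounded set $D$, hence contains $\infty$; since $f$ maps $\D$ into $\C$, there is $\zeta \in \partial \D$ with $\tilde f(\zeta) = \infty$. Letting $z_2 \to \zeta$ in \eqref{definition} and using $\sigma(w,\infty) = (1+|w|^2)^{-1/2}$ yields the pointwise lower bound
\[|f(z)| \ge \frac{1}{2K|z-\zeta|^\alpha}\]
for $z \in \D$ sufficiently close to $\zeta$.

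Next, fix $p > 1/\alpha$ and estimate the integral means. Writing $\zeta = e^{i\theta_0}$ and using $|re^{i\theta} - \zeta|^2 \asymp (1-r)^2 + (\theta-\theta_0)^2$ for $r$ near $1$ and $\theta$ near $\theta_0$, one restricts $\int_0^{2\pi}|f(re^{i\theta})|^p\,d\theta$ to a small arc around $\theta_0$ and substitutes $\theta - \theta_0 = (1-r)t$. The result is a lower bound of order $(1-r)^{1-p\alpha}$ times a convergent integral in $t$, which tends to $+\infty$ as $r \to 1^-$ because $p\alpha > 1$. Hence $f \notin H^p(\D)$ and $\mathrm{h}(D) \le 1/\alpha$. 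The step I expect to require the most care is the extraction of this lower bound: one must verify that \eqref{definition} survives passage to the limit at $\zeta$ and that the resulting inequality is strong enough to dominate the sharp $(1-r)^{1-p\alpha}$ behavior in the integral.

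For sharpness I would take the sector $D_\alpha = \{w \in \C\setminus\{0\} : |\arg w| < \pi\alpha/2\}$ with Riemann mapping $f(z) = \left(\frac{1+z}{1-z}\right)^{\alpha}$. A direct computation using $|f'(z)| = 2\alpha|1+z|^{\alpha-1}|1-z|^{-\alpha-1}$, the AM--GM inequality for $|1+z|^{2\alpha}+|1-z|^{2\alpha}$, and the elementary estimate $|1-z^2|\ge 1-|z|$, yields $f^{\#}(z) \le C_\alpha (1-|z|)^{\alpha-1}$; hence $D_\alpha$ is an unbounded $\alpha$-Hölder domain by the spherical derivative characterization (the first theorem above). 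A classical computation shows that $\int_0^{2\pi}|f(re^{i\theta})|^p\,d\theta$ is uniformly bounded in $r<1$ precisely when $p\alpha<1$, giving $\mathrm{h}(D_\alpha) = 1/\alpha$ and confirming sharpness.
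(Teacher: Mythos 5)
Your argument is correct, and for the upper bound it takes a genuinely different route from the paper. You prove directly that $f\notin H^p(\D)$ for every $p>1/\alpha$: the H\"older condition \eqref{definition} gives a continuous extension of $f$ to $\overline{\D}$ with a boundary point $\zeta$ sent to $\infty$, and then $\sigma(f(z),\infty)\le K|z-\zeta|^{\alpha}$ combined with \eqref{sphecho} (note that your identity for $\sigma(w,\infty)$ is really the chordal quantity $\chi(w,\infty)=2/\sqrt{1+|w|^2}$, with $\sigma\ge\chi$; this only changes constants) yields $|f(z)|\gtrsim |z-\zeta|^{-\alpha}$ near $\zeta$, after which the integral-means estimate $\int_{|\theta-\theta_0|<\delta}\bigl((1-r)+|\theta-\theta_0|\bigr)^{-p\alpha}d\theta\asymp (1-r)^{1-p\alpha}\to\infty$ does force divergence when $p\alpha>1$. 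This is more elementary and self-contained than the paper's proof, which instead invokes the hyperbolic characterization of unbounded H\"older domains (Theorem \ref{hyho}) along the geodesic to the prime end at $\infty$, together with the formula ${\rm h}(D)=\liminf_{r\to\infty}h_D(w_0,C_r)/\log r$ from \cite{Kar}; the paper's route buys an illustration of the new machinery (the point of that section) and stays entirely within the hyperbolic-distance framework, while yours needs only the boundary continuity and the spherical--chordal comparison. For sharpness the two arguments essentially coincide: you use the sector of opening $\pi\alpha$ and the same spherical-derivative bound via Theorem \ref{hoderivative}; the only difference is that you conclude ${\rm h}(D_\alpha)=1/\alpha$ from the classical $H^p$ membership of $\bigl(\tfrac{1+z}{1-z}\bigr)^{\alpha}$ rather than quoting Hansen's ${\rm h}(S_\theta)=\pi/\theta$ as in \cite{Han}.
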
 

Finally, we note that an interesting property of the Hardy number of a simply connected domain $D$ is that it relates to the exit time of Brownian motion from $D$. In particular, if ${\widetilde h}(D)$ is the supremum of all $p>0$ for which the $p$-th moment of the exit time of Brownian motion is finite, then ${\widetilde h}(D)={\rm h}(D)/2$; see \cite{Bur}.

\section{preliminaries}

\subsection{Spherical distance} The {\textit{spherical length}} of a path $\gamma$ in $\C_{\infty}=\C\cup\{\infty\}$ is
\[l_{\sigma} (\gamma)=2\int_{\gamma}\frac{|dz|}{1+|z|^2}\]
and the \textit{spherical distance} between two points $z,w\in \C_{\infty}$ is
\[\sigma (z,w)=\inf_{\gamma} l_{\sigma}(\gamma),\]
where the infimum is taken over all the paths $\gamma$ joining $z$ to $w$. If $f$ is a meromorphic function then the \textit{spherical derivative} of $f$ is defined by
\[f^{\#}(z)=\frac{2|f'(z)|}{1+|f(z)|^2}.\]
If $\gamma$ is a curve, then the spherical length of the image curve $f\circ \gamma$ is
\[l_{\sigma}(f\circ \gamma)=\int_{\gamma} f^{\#}(z)|dz|.\]
The spherical length, the spherical distance and the spherical derivative are all invariant under rotations of the sphere.

The spherical distance between two points is comparable to their chordal distance. The {\textit{chordal distance}} between two points $z,w \in \C$ is given by
\[\chi (z,w)=\frac{2|z-w|}{\sqrt{1+|z|^2}\sqrt{1+|w|^2}}. \]
Moreover, $\chi (\infty,\infty)=0$ and $\chi (z,\infty)=2/\sqrt{1+|z|^2}$. The chordal distance is also invariant under rotations of the sphere. For every $z,w\in \C_{\infty}$,
\begin{equation}\label{sphecho}
\chi (z,w)\le {\sigma}(z,w)\le \frac{\pi}{2} \chi (z,w). 
\end{equation}

\subsection{Koebe's distortion theorem} Let $f$ be an analytic and univalent function in $\D$. We state the Koebe distortion theorem \cite[p.\ 9]{Pom}.

\begin{theorem}\label{koebedistortion} If $f$ maps $\D$ conformally into $\C$ then, for $z\in \D$,
\[|f'(0)|\frac{|z|}{(1+|z|)^2}\le |f(z)-f(0)|\le |f'(0)|\frac{|z|}{(1-|z|)^2}\]
and
\[|f'(0)| \frac{1-|z|}{(1+|z|)^3}\le|f'(z)|\le |f'(0)|\frac{1+|z|}{(1-|z|)^3}.\]
\end{theorem}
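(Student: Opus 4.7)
The plan is to reduce the theorem, via a normalization and the Koebe transform, to Bieberbach's coefficient estimate $|a_2|\le 2$ for $f\in S$ (normalized univalent functions on $\D$ with $f(0)=0$, $f'(0)=1$), and then integrate the resulting pointwise derivative inequality along radial rays. After replacing $f$ by $(f-f(0))/f'(0)$, I may assume $f\in S$ and prove the bounds with $f(0)=0$ and $|f'(0)|=1$. The only nontrivial input is Bieberbach's theorem itself, which I would derive by applying Gronwall's area theorem to the odd univalent square-root branch $g(\zeta)=\sqrt{f(\zeta^2)}$, reading off the relevant coefficient of $1/g(\zeta)$ in its Laurent expansion at infinity.

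For the derivative bounds I would use the Koebe transform: for $a\in\D$, the function
\[F(\zeta)=\frac{f\!\left(\dfrac{\zeta+a}{1+\bar a\zeta}\right)-f(a)}{(1-|a|^2)\,f'(a)}\]
is again in $S$. A direct computation of $F''(0)$ yields
\[a_2(F)=\tfrac12(1-|a|^2)\frac{f''(a)}{f'(a)}-\bar a,\]
so Bieberbach gives $\bigl|(1-|a|^2)f''(a)/f'(a)-2\bar a\bigr|\le 4$. Writing $a=re^{i\theta}$, using $\partial_r\log|f'(re^{i\theta})|=\operatorname{Re}\bigl(e^{i\theta}f''(a)/f'(a)\bigr)$, and taking real parts, I obtain the differential inequality
\[\frac{2r-4}{1-r^2}\le\frac{\partial}{\partial r}\log|f'(re^{i\theta})|\le\frac{2r+4}{1-r^2}.\]
Integrating from $0$ to $|z|$ via partial fractions (the integrand decomposes as $A/(1-r)+B/(1+r)$ with $(A,B)=(3,1)$ and $(-1,-3)$, respectively) and exponentiating produces the stated two-sided bound on $|f'(z)|/|f'(0)|$.

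To get the bounds on $|f(z)-f(0)|$, the upper bound is a straightforward integration of $|f'|$ along the radial segment $[0,z]$, yielding $\int_0^{|z|}\frac{1+r}{(1-r)^3}\,dr=\frac{|z|}{(1-|z|)^2}$. For the lower bound I would invoke Koebe's $1/4$ theorem (itself a consequence of Bieberbach applied to a suitable omitted-value transform of $f$): it gives $f(\D)\supset D(f(0),|f'(0)|/4)$, so if $|f(z)-f(0)|\ge |f'(0)|/4$ the target inequality is automatic from $|z|/(1+|z|)^2\le 1/4$, while otherwise the segment from $f(0)$ to $f(z)$ lies in $f(\D)$ and can be pulled back via $f^{-1}$ to a curve $\gamma$ in $\D$ from $0$ to $z$; parametrizing by arclength on the image side gives $|z|\le|f(z)-f(0)|\int_0^1|f'(\gamma(t))|^{-1}\,dt$, and the pointwise lower bound on $|f'|$ together with a control of $|\gamma(t)|$ via the already-established upper growth estimate closes the argument. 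The principal obstacle is the coefficient estimate itself; once Bieberbach is in hand, the rest is elementary calculus, with the only subtlety being the case analysis in the lower growth bound.
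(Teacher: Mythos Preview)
The paper does not prove this theorem at all: it is quoted in the preliminaries as the classical Koebe distortion theorem with a reference to \cite[p.~9]{Pom}. So there is no ``paper's own proof'' to compare against, and your outline is simply the standard textbook argument (Bieberbach $\Rightarrow$ Koebe transform $\Rightarrow$ differential inequality for $\log|f'|$ $\Rightarrow$ integrate). That architecture is correct, and your computations for the derivative bounds and for the upper growth bound are accurate.

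There is one genuine soft spot: your treatment of the lower growth bound. After pulling back the segment $[0,f(z)]$ to a curve $\gamma$ in $\D$, you propose to bound $\int_0^1|f'(\gamma(t))|^{-1}\,dt$ using the lower bound on $|f'|$ together with ``control of $|\gamma(t)|$ via the already-established upper growth estimate''. But the upper growth estimate $t|f(z)|\le |\gamma(t)|/(1-|\gamma(t)|)^2$ gives a \emph{lower} bound on $|\gamma(t)|$, which is the wrong direction for bounding $(1+|\gamma(t)|)^3/(1-|\gamma(t)|)$ from above; as written, your argument does not recover the sharp constant $|z|/(1+|z|)^2$. The clean fix is to avoid this detour entirely: parametrize $C=f^{-1}([0,f(z)])$ by arclength $s\in[0,L]$, set $r(s)=|\zeta(s)|$ so that $r(0)=0$, $r(L)=|z|$ and $r'(s)\le 1$, and observe that since $g(r)=(1-r)/(1+r)^3\ge 0$,
\[
|f(z)|=\int_C|f'(\zeta)|\,|d\zeta|\ge\int_0^L g(r(s))\,ds\ge\int_0^L g(r(s))\,r'(s)\,ds=\int_0^{|z|}g(r)\,dr=\frac{|z|}{(1+|z|)^2}.
\]
This replaces the vague step in your sketch and gives the sharp bound directly.
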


\begin{corollary}\label{koebedistance} If $f$ maps $\D$ conformally into $\C$ then, for $z\in \D$,
\[\frac{1}{4}(1-|z|)|f'(z)|\le \dist (f(z),\partial f(\D))\le 2(1-|z|)|f'(z)|. \]
\end{corollary}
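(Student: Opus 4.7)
The plan is to reduce to the case $z=0$ by precomposing with a disk automorphism, and then invoke the classical Koebe $1/4$ theorem for the lower bound and the Schwarz lemma (applied to the local inverse) for the upper bound.

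More precisely, fix $z\in\D$ and let $\phi(w)=(w+z)/(1+\bar z w)$ be the Möbius self-map of $\D$ with $\phi(0)=z$; a direct computation gives $\phi'(0)=1-|z|^2$. Define $g=f\circ\phi$. Then $g$ is a conformal (univalent) map of $\D$ into $\C$ with $g(\D)=f(\D)$, $g(0)=f(z)$, and
\[
|g'(0)|=|f'(\phi(0))|\cdot|\phi'(0)|=(1-|z|^2)|f'(z)|.
\]
Thus $\dist(f(z),\partial f(\D))=\dist(g(0),\partial g(\D))$, and it suffices to prove the two-sided bound
\[
\tfrac14|g'(0)|\le\dist(g(0),\partial g(\D))\le |g'(0)|,
\]
because $1-|z|\le 1-|z|^2\le 2(1-|z|)$ then yields exactly the stated inequalities.

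For the lower bound I would appeal to the Koebe $1/4$ theorem, which is an immediate consequence of Theorem \ref{koebedistortion} (and standard in the references cited for it): for any univalent $g:\D\to\C$, $g(\D)\supseteq B\bigl(g(0),|g'(0)|/4\bigr)$. Applied here, this gives $\dist(g(0),\partial g(\D))\ge |g'(0)|/4$. For the upper bound, set $d=\dist(g(0),\partial g(\D))$. Since $B(g(0),d)\subseteq g(\D)$, the inverse $g^{-1}$ is well-defined on $B(g(0),d)$; rescaling, the map $h:\D\to\D$ defined by $h(w)=\phi^{-1}\!\bigl(g^{-1}(g(0)+dw)\bigr)$ (equivalently, any normalization sending $B(g(0),d)$ conformally into $\D$ via $g^{-1}$ and an automorphism of $\D$) satisfies $h(0)=0$. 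The Schwarz lemma then gives $|h'(0)|\le 1$, which after computing the chain rule becomes $d/|g'(0)|\le 1$, i.e.\ $d\le |g'(0)|$.

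There is no serious obstacle: the argument is entirely standard, and the only point that requires a word of care is tracking the correct factor $(1-|z|^2)$ versus $(1-|z|)$. Writing $1-|z|\le 1-|z|^2\le 2(1-|z|)$ converts the clean bound $\tfrac14|g'(0)|\le \dist\le |g'(0)|$ into the asymmetric constants $1/4$ and $2$ appearing in the statement.
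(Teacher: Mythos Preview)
The paper does not supply a proof of this corollary; it is simply recorded as a standard consequence of the Koebe distortion theorem (Theorem~\ref{koebedistortion}), with reference to \cite[p.~9]{Pom}. Your argument---reduce to $z=0$ via a disk automorphism, then apply the Koebe $1/4$ theorem and the Schwarz lemma---is the standard one and is correct in substance.

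One small slip: in your definition of $h$ you wrote $h(w)=\phi^{-1}\bigl(g^{-1}(g(0)+dw)\bigr)$, but since $g^{-1}(g(0))=0$ already, the extra $\phi^{-1}$ is superfluous and in fact sends $0$ to $\phi^{-1}(0)=-z$ rather than to $0$. Simply take $h(w)=g^{-1}(g(0)+dw)$; then $h:\D\to\D$ with $h(0)=0$, the Schwarz lemma applies directly, and $|h'(0)|=d/|g'(0)|\le 1$ gives the desired bound. Your parenthetical remark makes clear you have the right picture, so this is only a notational correction.
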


We can also apply the above results to a conformal mapping defined on an arbitrary open disk $D(z_0,r)\subset \C$ using an appropriate linear transformation from $\D$ onto $D(z_0,r)$. 

\subsection{Geometric properties in the disk} Let $x,y$ be two points on a circle. If $C_{xy}$ is the smallest arc of the circle joining $x$ to $y$, then
\begin{equation}\label{archord}
|x-y| \le l(C_{xy})\le\frac{\pi}{2}|x-y|,
\end{equation}
where $l(A)$ denotes the Euclidean length of a curve $A$. Now, let $\Gamma$ be a hyperbolic geodesic in $\D$ joining two points on $\partial \D$. Let $z\in \Gamma $ and let $y$ be the endpoint of $\Gamma$ which is closer to $z$. If $\Gamma_{zy}$ denotes the subarc of $\Gamma$ joining $z$ to $y$, then
\begin{equation}\label{geodesic}  
1-|z|\le l(\Gamma_{zy}) \le \frac{\pi}{2} (1-|z|).
\end{equation}
This can be seen applying a M\"{o}bius transformation from $\D$ onto the upper half-plane and then using elementary calculus and geometric arguments. 

\section{Unbounded H\"{o}lder domains and spherical derivative}\label{defderivative}

\begin{theorem}\label{hoderivative} Let $D\subsetneq \C$ be an unbounded simply connected domain and $f$ be a Riemann mapping from $\D$ onto $D$. If there are constants $K>0$ and $0<\alpha\le 1$ such that, for every $z_1,z_2\in \D$,
\begin{equation}\label{defcond}
\sigma (f(z_1),f(z_2))\le K|z_1-z_2|^{\alpha},
\end{equation}
then, for every $z\in \D$,
\[f^{\#}(z)\le \frac{M}{(1-|z|)^{1-\alpha}},\]
where $M=90K(1+\dist(0,\C\backslash D))$. Conversely, if there exist constants $M>0$ and $0<\alpha\le 1$ such that, for every $z\in \D$,
\[f^{\#}(z)\le \frac{M}{(1-|z|)^{1-\alpha}},\]
then  (\ref{defcond}) is satisfied for some constant $K>0$ depending on $M$ and $\alpha$.
\end{theorem}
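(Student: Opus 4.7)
For the forward direction, fix $z\in\D$ and set $\rho=(1-|z|)/2$, so $D(z,\rho)\subset\D$. My plan is to apply Theorem~\ref{koebedistortion} to the rescaled univalent map $g(\zeta)=f(z+\rho\zeta)$ on $\D$, for which $g'(0)=\rho f'(z)$. The lower bound in Koebe produces a point $z'=z+\rho\zeta_0$ with $|z'-z|=\rho/2$ and $|f(z')-f(z)|\ge\tfrac{2}{9}\rho|f'(z)|$, while the upper bound gives $|f(z')|\le|f(z)|+2\rho|f'(z)|$. The H\"older hypothesis yields $\sigma(f(z),f(z'))\le K(\rho/2)^{\alpha}$, and combining this with the chord--sphere comparison $\sigma\ge\chi$ from \eqref{sphecho}, after substituting $|f'(z)|=(1+|f(z)|^2)f^{\#}(z)/2$, produces
\[
\frac{2\rho\,f^{\#}(z)}{9}\sqrt{\frac{1+|f(z)|^2}{1+|f(z')|^2}}\le K\left(\frac{\rho}{2}\right)^{\alpha}.
\]

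From the inequality $1+|f(z')|^2\le 2(1+|f(z)|^2)+8\rho^2|f'(z)|^2$, I would split into two regimes. When $8\rho^2|f'(z)|^2\le 2(1+|f(z)|^2)$, the square--root factor above is bounded below by $1/2$, and the display immediately gives $f^{\#}(z)\le C_1 K/(1-|z|)^{1-\alpha}$ for an absolute $C_1$. Otherwise, the display degenerates to the lower bound $\sqrt{1+|f(z)|^2}\ge c/(K\rho^{\alpha})$, so $|f(z)|$ must itself be large. In this second regime I would invoke Corollary~\ref{koebedistance} at $z$, giving $\rho|f'(z)|\le 2\dist(f(z),\partial D)$, together with the triangle inequality $\dist(f(z),\partial D)\le|f(z)|+\dist(0,\C\backslash D)$. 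Plugging the resulting bound $|f'(z)|\le 2(|f(z)|+\dist(0,\C\backslash D))/\rho$ into $f^{\#}(z)=2|f'(z)|/(1+|f(z)|^2)$ and using the lower bound on $|f(z)|$ to absorb the terms $|f(z)|/(1+|f(z)|^2)$ and $\dist(0,\C\backslash D)/(1+|f(z)|^2)$ yields $f^{\#}(z)\le C_2 K(1+\dist(0,\C\backslash D))/(1-|z|)^{1-\alpha}$; this is the step that puts the factor $1+\dist(0,\C\backslash D)$ into $M=90K(1+\dist(0,\C\backslash D))$.

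For the converse, assume the derivative bound and take $z_1,z_2\in\D$, WLOG $|z_1|\le|z_2|$. I would bound $\sigma(f(z_1),f(z_2))\le\int_\gamma f^{\#}(w)\,|dw|$ for a suitable path $\gamma\subset\D$. When $|z_1-z_2|\le(1-|z_2|)/2$, take $\gamma$ the straight segment; then $1-|w|\ge(1-|z_2|)/2$ along $\gamma$, and combined with $|z_1-z_2|\le(1-|z_2|)/2$ a direct computation yields $\int_\gamma f^{\#}|dw|\le M|z_1-z_2|^{\alpha}$. When $|z_1-z_2|$ is larger, use a three-piece path: two radial segments from $z_1,z_2$ to a circle $|w|=\max(1-2|z_1-z_2|,1/2)$, joined by the shorter arc of that circle. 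On each radial piece the antiderivative of $(1-|w|)^{\alpha-1}$ contributes a term of order $|z_1-z_2|^{\alpha}$; on the arc, $1-|w|$ is bounded below so the integrand is bounded, and the arc length is controlled by $|z_1-z_2|$ via \eqref{archord}, yielding another such term.

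The substantive step is the second case of the forward direction. Because the spherical distance is bounded by $\pi$, the H\"older hypothesis alone cannot distinguish a bounded from a huge image displacement on the Riemann sphere: when $f(z')$ escapes far toward $\infty$, the naive chordal bound degenerates and one cannot directly extract a useful estimate on $|f'(z)|$. Closing the argument therefore requires exploiting the damping $1/(1+|f(z)|^2)$ in the definition of $f^{\#}$ (which shrinks precisely when $f$ is large) in tandem with Koebe's corollary and the triangle inequality anchored at the basepoint $0$, and this is what injects the factor $1+\dist(0,\C\backslash D)$ into the constant $M$.
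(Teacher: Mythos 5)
Your argument is correct, but it follows a genuinely different route from the paper in both directions. For the forward implication, the paper first converts the spherical H\"older condition into the chordal inequality $|f(z_1)-f(z_2)|\le\tfrac K2|z_1-z_2|^{\alpha}\sqrt{1+|f(z_1)|^2}\sqrt{1+|f(z_2)|^2}$ and then splits according to whether $0\in D$ and whether $z$ is close to $z_0=f^{-1}(0)$: away from $z_0$ it represents $f'(z)$ by Cauchy's integral formula on a circle of radius $r/2$ and uses Koebe together with $0\notin f(D(z,r))$ to get $|f(z+\tfrac r2e^{it})|\le 9|f(z)|$, while near $z_0$ the factor $1+\dist(0,\partial D)$ enters through a boundary estimate at a nearest point of $\partial\D$. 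Your dichotomy on whether $\rho|f'(z)|$ is dominated by $\sqrt{1+|f(z)|^2}$ replaces all of this with a single comparison point $z'$ at distance $\rho/2$ and Koebe's two-sided bounds; it avoids the Cauchy formula and the case distinction $0\in D$ versus $0\notin D$, and the large-image regime is closed exactly where the paper's Case 1(a) is, via Corollary~\ref{koebedistance} and the triangle inequality anchored at $0$, producing the same shape of constant $K(1+\dist(0,\C\backslash D))$ (your constants, about $72$, even fit under the stated $90$). For the converse, the paper integrates $f^{\#}$ along the hyperbolic geodesic through $z_1,z_2$, dyadically partitioned, with three positional cases; you use the classical Hardy--Littlewood path (straight segment when $|z_1-z_2|\le(1-|z_2|)/2$, otherwise radial segments joined by a circular arc), picking up the $1/\alpha$ from the antiderivative of $(1-t)^{\alpha-1}$ where the paper picks up $1/(1-2^{-\alpha})$ from the geometric series. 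Your route is the more familiar one, but to make the arc estimate airtight you should record the projection inequality $|z_1/|z_1|-z_2/|z_2||\le 2|z_1-z_2|/\max(|z_1|,|z_2|)$ (valid here because in your second case $R=1-2|z_1-z_2|\le|z_2|$) and dispose separately of the degenerate situation where $|z_1|$ is small or $R=1/2$, in which $|z_1-z_2|$ is bounded below by an absolute constant so a crude bound $\lesssim M/\alpha$ suffices; these are routine and do not affect correctness. The paper's geodesic decomposition buys uniformity (no degenerate positions), while your construction is shorter and closer to the classical Euclidean proof.
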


\begin{proof} Suppose that (\ref{defcond}) is satisfied  for some constants $K>0$ and $0<\alpha\le 1$. Note that this condition implies that $f$ has a continuous extension on $\overline{\D}$ that satisfies (\ref{defcond}). By (\ref{defcond}) and (\ref{sphecho}) we derive that, for every $z_1,z_2\in \D$,
\begin{equation}\label{assumho}
|f(z_1)-f(z_2)|\le \frac{K}{2}|z_1-z_2|^{\alpha}\sqrt{1+|f(z_1)|^2}\sqrt{1+|f(z_2)|^2}.
\end{equation}

{\bf Case 1:} Assume that $0\in D$ and let $z_0=f^{-1}(0)$. We consider the disk $D(z_0,r_0)$ with $r_0={(1-|z_0|)}/{4}$.

{\bf Case 1(a):} Let $z\in D(z_0,r_0)$. Since $|z-z_0|\le r_0$, we have 
\[ 1-|z|\le |z-z_0|+1-|z_0|\le \frac{5}{4}(1-|z_0|),\]
and thus
\begin{equation}\label{r0}
1-|z_0|\ge \frac{4}{5}(1-|z|).
\end{equation} 
Applying Theorem \ref{koebedistortion} on $D(z_0, 2r_0)$ and Corollary \ref{koebedistance} on $\D$, we have
\begin{align}\label{fder}
f^{\#}(z)=\frac{2|f'(z)|}{1+|f(z)|^2}\le 2|f'(z)|\le 8|f'(z_0)|\le \frac{32\dist (0,\partial D)}{1-|z_0|}.
\end{align}
Let $w\in \partial \D$ such that $1-|z_0|=|w-z_0|$. By the fact that $\dist(0,\partial D)\le |f(w)|$ and by (\ref{assumho}), we deduce that
\[ \frac{\dist(0,\partial D)}{\sqrt{1+\dist(0,\partial D)^2}} \le \frac{|f(w)|}{\sqrt{1+|f(w)|^2}}\le \frac{K}{2}(1-|z_0|)^{\alpha}.\]
This in combination with (\ref{r0}) and (\ref{fder}) gives
\[
f^{\#}(z)\le \left(\frac{5}{4}\right)^{1-\alpha}\frac{16K\sqrt{1+\dist(0,\partial D)^2}}{(1-|z|)^{1-\alpha}}\le \frac{20K(1+\dist(0,\partial D))}{(1-|z|)^{1-\alpha}},
\]
for every $z\in D(z_0,r_0)$.

{\bf Case 1(b):} Let $z\in \D\backslash D(z_0,r_0)$. Then the disk $D(z,r)$ with radius $r=\min \{1-|z|,|z-z_0|\}$ lies in  $\D \backslash \{z_0\}$. We claim that 
\begin{equation}\label{claim1}
\frac{1-|z|}{5}\le r\le 1-|z|.
\end{equation}
Obviously, this holds when $r=1-|z|$. Suppose that  $r=|z-z_0|$. Since $z\in \D\backslash D(z_0,r_0)$ we have $r\ge r_0$. So, 
\[1-|z|\le |z-z_0|+1-|z_0|=r+4r_0\le 5r,\]
which verifies (\ref{claim1}). Now, applying Cauchy's integral formula on the circle $\gamma(t)=z+\frac{r}{2}e^{it}$, for $t\in [0,2\pi)$,  we infer that
\begin{align}
f'(z)&=\frac{1}{2\pi i}\int_{\gamma} \frac{f(w)}{(w-z)^2}dw=\frac{1}{\pi r}\int_{0}^{2\pi }\frac{f(z+\frac{r}{2}e^{it})}{e^{it}}dt \nonumber\\
&=\frac{1}{\pi r}\int_{0}^{2\pi }\frac{f(z+\frac{r}{2}e^{it})-f(z)}{e^{it}}dt \nonumber
\end{align}
and hence
\[|f'(z)|\le \frac{1}{\pi r}\int_{0}^{2\pi }|f(z+\frac{r}{2}e^{it})-f(z)|dt.\]
This in conjunction with (\ref{assumho}) gives
\begin{align}\label{claim2}
f^{\#}(z)&=\frac{2|f'(z)|}{1+|f(z)|^2}\le \frac{2}{\pi r}\int_{0}^{2\pi }\frac{|f(z+\frac{r}{2}e^{it})-f(z)|}{1+|f(z)|^2}dt
\nonumber\\
&\le \frac{K}{2^{\alpha}\pi r^{1-\alpha}}\int_0^{2\pi} \frac{\sqrt{1+|f(z+\frac{r}{2}e^{it})|^2}}{\sqrt{1+|f(z)|^2}}dt 
\end{align}
Applying Theorem \ref{koebedistortion} and Corollary \ref{koebedistance} on $D(z,r)$, we have
\begin{align}\label{claim3}
|f(z+\frac{r}{2}e^{it})|&\le |f(z+\frac{r}{2}e^{it})-f(z)|+|f(z)|\le 2r|f'(z)|+|f(z)| \nonumber\\
&\le 8\dist (f(z),\partial f(D(z,r)))+|f(z)|\le 9|f(z)|.
\end{align}
The last inequality comes from the fact that $0$ lies in the complement of $f(D(z,r))$. Therefore, by (\ref{claim1}), (\ref{claim2}) and (\ref{claim3}) we deduce that
\begin{equation}\label{casefar}
f^{\#}(z)\le \frac{9K10^{1-\alpha}}{(1-|z|)^{1-\alpha}}\le \frac{90K}{(1-|z|)^{1-\alpha}},
\end{equation}
for every $z\in \D\backslash D(z_0,r_0)$.

{\bf Case 2:} Suppose $0\notin D$. Let $z\in \D$ and take $D(z,r)$ with $r=1-|z|$. Working exactly as in Case 1(b) (with the only difference that now $r=1-|z|$) we deduce that (\ref{casefar}) holds for every $z\in \D$.

Finally, combining all the cases above we derive that, for every $z\in \D$,
\[f^{\#}(z)\le \frac{90K(1+\dist(0,\C\backslash D))}{(1-|z|)^{1-\alpha}}.\]

Conversely, suppose there are $M>0$ and $0<\alpha \le 1$ such that, for every $z\in \D$,
\begin{equation}\label{esti}
f^{\#} (z)\le \frac{M}{(1-|z|)^{1-\alpha}}.
\end{equation}
We take $z,w\in \D$. Let $\Gamma$ be the hyperbolic geodesic in $\D$ passing through $z$ and $w$ and having two endpoints $y,y'$ on $\partial \D$ as in Fig.\ \ref{partit}. Also, let $\Gamma_{zw}$ be the subarc of $\Gamma$ joining $z$ to $w$. We set $l:=l(\Gamma)$ and we take the partition $P=\{\gamma_1,\gamma_2,\dots,\gamma'_1,\gamma'_2,\dots\}$ of $\Gamma$ as in Fig.\ \ref{partit} such that
\[l(\gamma_n)=l(\gamma'_n)=\frac{l}{2^{n+1}},\]
for every $n\ge 1$. Next, we estimate $\sigma(f(z),f(w))$. We consider three cases.

\begin{figure}
	\begin{overpic}[width=11cm]{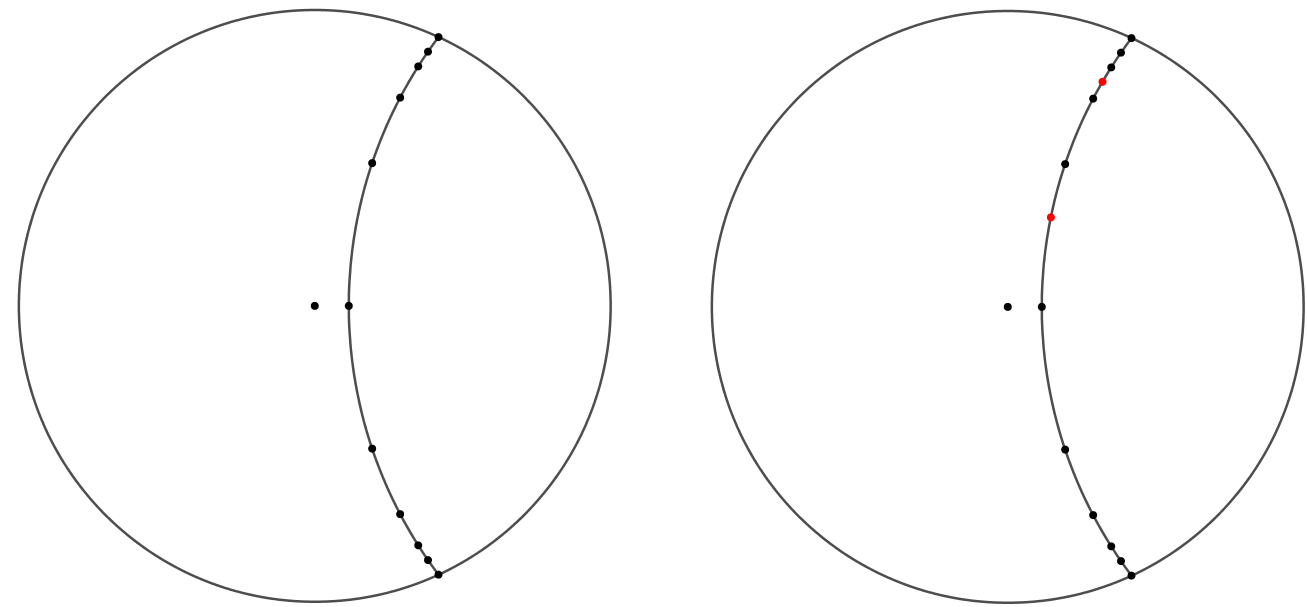}
		\put (23,20) {$0$}	
		\put (76,20) {$0$}
		\put (34,45) {$y$}
		\put (34,0) {$y'$}
		\put (24,29) {$\gamma_1$}	
		\put (26,37) {$\gamma_2$}	
		\put (28,41) {$\gamma_3$}	
		\put (24,15) {$\gamma'_1$}
		\put (26,8) {$\gamma'_2$}	
		\put (27,4) {$\gamma'_3$}
		\put (82,41) {\textcolor{red}{$z$}}
		\put (77,29) {\textcolor{red}{$w$}}
		\put (87,45) {$y$}
		\put (82,29) {$\gamma_k$}	
		\put (86,38) {$\gamma_m$}		
	\end{overpic}
	\caption{Left: The partition of $\Gamma$.\ Right: The points $z,w$.}
	\label{partit}	
\end{figure}

{\bf Case 1:} Let $w\in \gamma_{m-1}$ and $z\in \gamma_{m}$ for some $m> 1$. We observe that
\[|z-w|\le l(\Gamma_{zw})\le l(\gamma_{m-1})+l(\gamma_{m})=\frac{3l}{2^{m+1}}.\]
If $\Gamma_{zy}$ is the subarc of $\Gamma$ joining $z$ to $y$, then this and (\ref{geodesic}) give
\begin{equation}\label{sx2}
1-|z|\ge \frac{2l(\Gamma_{zy})}{\pi}\ge \frac{2l(\gamma_{m})}{\pi}=\frac{l}{\pi2^{m}}\ge\frac{2|z-w|}{3\pi},
\end{equation}
for a constant $c>1$. By (\ref{archord}), (\ref{esti}) and (\ref{sx2}) we have
\begin{align}\label{m1}
\sigma(f(z),f(w))&\le \int_{\Gamma_{zw}} f^{\#}(\zeta)|d\zeta|\le \int_{\Gamma_{zw}} \frac{M|d\zeta|}{(1-|\zeta|)^{1-\alpha}}\le \frac{M l(\Gamma_{zw})}{(1-|z|)^{1-\alpha}}  \nonumber\\
&\le \frac{M\pi|z-w|}{2(1-|z|)^{1-\alpha}} \le M_1 |z-w|^{\alpha}, 
\end{align}
where $M_1=M3^{1-\alpha}(\pi/2)^{2-\alpha}$. We note that the same proof applies when $z,w$ lie in the same arc or in consecutive arcs of the partition $P$.

{\bf Case 2:} Let $w\in \gamma_k$ and $z\in \gamma_{m}$ for some $m,k\ge 1$ such that $k+1<m$. So, $z,w$ do not belong to the same arc or consecutive arcs of $P$ (see Fig.\ \ref{partit}). By (\ref{esti}) we have
\begin{align}
\sigma(f(z),f(w))\le \sum_{i=k}^{m} \int_{\gamma_i} f^{\#}(\zeta)|d\zeta| \le\sum_{i=k}^{m} \int_{\gamma_i}\frac{M}{(1-|\zeta|)^{1-\alpha}}|d\zeta|. \nonumber
\end{align}
If $\zeta\in \gamma_i$, then by (\ref{geodesic}) we deduce that there is $c>1$ such that
\begin{align}\label{relnew}
1-|\zeta|\ge \frac{2l(\Gamma_{\zeta y})}{\pi}\ge\frac{2l(\gamma_i)}{\pi}.
\end{align}
Hence, it follows that
\begin{align}\label{big}
\sigma(f(z),f(w))&\le M\left(\frac{\pi}{2} \right)^{1-\alpha}\sum_{i=k}^{m} \int_{\gamma_i}\frac{|d\zeta|}{l(\gamma_i)^{1-\alpha}}=M\left(\frac{\pi}{2} \right)^{1-\alpha}\sum_{i=k}^{m} l(\gamma_i)^{\alpha}   \nonumber\\
&=M\left(\frac{\pi}{2} \right)^{1-\alpha}l^{\alpha}\sum_{i=k}^{m} \frac{1}{2^{(i+1)\alpha}}\le \frac{M(\pi/2)^{1-\alpha}}{1-1/2^{\alpha}} \frac{l^{\alpha}}{2^{(k+1)\alpha}}.
\end{align}
By (\ref{archord}) we have
\[|z-w|\ge \frac{2}{\pi}l(\Gamma_{zw})\ge \frac{2}{\pi}l(\gamma_{k+1})=\frac{l}{\pi2^{k+1}}\]
and thus (\ref{big}) gives
\begin{equation}\label{m2}
\sigma(f(z),f(w))\le M_2|z-w|^{\alpha},
\end{equation}
where $M_2=M\pi (1/2)^{1-\alpha}/(1-1/2^{\alpha})$. This proof also applies if $w\in \gamma'_k$ and $z\in \gamma'_m$.

{\bf Case 3:}  Let $w\in \gamma'_k$ and $z\in \gamma_{m}$ for some $k,m\ge 1$ such that $\gamma'_k,\gamma_m$ are not consecutive. As in (\ref{relnew}), if $\zeta\in \gamma_i'$ then $1-|\zeta|\ge 2l(\gamma_i')/\pi$. By this, (\ref{esti}) and (\ref{relnew}) we have
\begin{align}
\sigma(f(z),f(w))&\le \sum_{i=1}^{m} \int_{\gamma_i} f^{\#}(\zeta)|d\zeta|+\sum_{i=1}^{k} \int_{\gamma'_i} f^{\#}(\zeta)|d\zeta| \nonumber\\
&\le \sum_{i=1}^{m} \int_{\gamma_i}\frac{M}{(1-|\zeta|)^{1-\alpha}}|d\zeta|+ \sum_{i=1}^{k} \int_{\gamma'_i}\frac{M}{(1-|\zeta|)^{1-\alpha}}|d\zeta| \nonumber\\
&\le M\left(\frac{\pi}{2} \right)^{1-\alpha}\left(\sum_{i=1}^{m} l(\gamma_i)^{\alpha} +\sum_{i=1}^{k} l(\gamma'_i)^{\alpha} \right)
 \nonumber\\
&= M\left(\frac{\pi}{2} \right)^{1-\alpha}l^{\alpha}\left(\sum_{i=1}^{m} \left(\frac{1}{2^{\alpha}} \right)^{i+1} +\sum_{i=1}^{k} \left(\frac{1}{2^{\alpha}} \right)^{i+1} \right) \nonumber\\
&\le \frac{2^{\alpha}M\pi^{1-\alpha}}{1-1/2^{\alpha}} l^{\alpha}. \nonumber
\end{align}
Moreover, by (\ref{archord}) we take
\[|z-w|\ge \frac{2}{\pi}l(\Gamma_{zw})\ge \frac{2}{\pi}l(\gamma_1)=\frac{l}{2\pi}.\]
Thus, we infer that
\begin{equation}\label{m3}
\sigma(f(z),f(w))\le M_3 |z-w|^{\alpha},
\end{equation}
where $M_3=4^{\alpha}M\pi/(1-1/2^{\alpha})$.

Since we have covered all cases, by (\ref{m1}), (\ref{m2}) and (\ref{m3}) we deduce that there is a positive constant $K=\max\{M_1,M_2,M_3\}$ depending on $M$ and $\alpha$ such that
\[\sigma(f(z),f(w))\le K|z-w|^{\alpha},\]
for every $z,w\in \D$. 
\end{proof}

\section{Unbounded H\"{o}lder domains and hyperbolic distance}\label{defhyperbolic}

Let $D \subsetneq \C$ be an unbounded simply connected domain and let $w_0\in D$. If  $\tilde{w}$ is a prime end of $D$, then $\Gamma$ denotes the hyperbolic geodesic in $D$ joining $w_0$ to $\tilde{w}$ and $\Gamma(w,\tilde{w})$ is the subarc of $\Gamma$ joining $w$ to $\tilde{w}$. Let $0<\alpha\le 1$. We consider the following conditions: 

\begin{enumerate}[label=(C\arabic*)]
\item There is a constant $C$ such that, for every $w\in \Gamma$,
\[h_D(w_0,w)\le C+\frac{1}{\alpha}\log\frac{1}{l_{\sigma}(\Gamma(w,\tilde{w}))}.\]
\item  There is a constant $C$ such that, for every $w\in \Gamma$, 
\[ h_D(w_0,w)\le C+\frac{1}{\alpha}\log\frac{1}{\dist_{\sigma}(w,\tilde{w})},\]
where $\dist_{\sigma}(w,\tilde w)$ denotes the spherical distance from $w$ to the impression of the prime end $\tilde w$. 
\item  There is a constant $C$ such that, for every $w\in \Gamma$,
\[h_D(w_0,w)\le C+\frac{1}{\alpha}\log\frac{1}{\dist_\sigma(w,\partial D)}.\]
\end{enumerate}
Note that condition (C1) implies condition (C2) and this implies condition (C3) because
\[l_{\sigma}(\Gamma (w,\tilde{w}))\ge \dist_\sigma(w,\tilde{w})\ge \sigma(w,\partial D).\]

\begin{theorem}\label{hyho}
Let $D\subsetneq \C$ be an unbounded simply connected domain and $f$ be a Riemann mapping from $\D$ onto $D$. If $f(0)=w_0$, then the following statements are equivalent.
\begin{enumerate}[label=\normalfont(\roman*)]
\item $D$ is an unbounded $\alpha$-H\"{o}lder domain.
\item For each prime end $\tilde{w}$ of $D$, the hyperbolic geodesic $\Gamma$ in $D$ joining $w_0$ to $\tilde{w}$ satisfies one of conditions {\rm (C1)}, {\rm (C2)} or {\rm (C3)} for some constant $C$ independent of $\tilde{w}$.
\item There is a constant $C$ such that, for every $w\in D$,
\[h_D(w_0,w)\le C+\frac{1}{\alpha}\log\frac{1}{\dist_\sigma(w,\partial D)}.\]
\end{enumerate}
The constants depend quantitatively on each other and on $\dist (0,\C \backslash D)$.
\end{theorem}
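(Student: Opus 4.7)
The approach is to close the loop (i) $\Rightarrow$ (C1) $\Rightarrow$ (C2) $\Rightarrow$ (C3) $\Leftrightarrow$ (iii) $\Rightarrow$ (i), using Theorem \ref{hoderivative} as the bridge between the analytic condition (i) and pointwise estimates on the spherical derivative $f^{\#}$. The chain (C1) $\Rightarrow$ (C2) $\Rightarrow$ (C3) is already recorded before the statement; moreover, (C3) $\Leftrightarrow$ (iii) is immediate, since every $w\in D$ lies on some hyperbolic geodesic from $w_0$ to a prime end (extend the geodesic from $0$ through $f^{-1}(w)$ in $\D$ to its endpoint on $\partial\D$ and take the $f$-image).

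For (i) $\Rightarrow$ (C1) I would first invoke Theorem \ref{hoderivative} to get $f^{\#}(z)\le M(1-|z|)^{\alpha-1}$. Since $f(0)=w_0$, each hyperbolic geodesic $\Gamma$ from $w_0$ to a prime end is the image under $f$ of a radius $\{se^{i\theta}:0\le s<1\}$, so for $w=f(re^{i\theta})\in\Gamma$ integration of $f^{\#}$ along this radius gives
\[
l_{\sigma}(\Gamma(w,\tilde w))=\int_{r}^{1}f^{\#}(se^{i\theta})\,ds\le \frac{M}{\alpha}(1-r)^{\alpha}.
\]
This rearranges to $\log(1-r)^{-1}\le (1/\alpha)\log(M/\alpha)+(1/\alpha)\log l_{\sigma}(\Gamma(w,\tilde w))^{-1}$, and combining with $h_{D}(w_0,w)=h_{\D}(0,re^{i\theta})\le \log 2+\log(1-r)^{-1}$ yields (C1).

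The substantive direction is (iii) $\Rightarrow$ (i). Substituting $w=f(z)$ into (iii) and using $h_{\D}(0,z)\ge\log(1-|z|)^{-1}$ gives $\dist_{\sigma}(f(z),\partial D)\le e^{C\alpha}(1-|z|)^{\alpha}$. By Theorem \ref{hoderivative}, (i) follows once one proves a spherical Koebe-type estimate
\[
f^{\#}(z)\le \frac{C'}{1-|z|}\,\dist_{\sigma}(f(z),\partial D).
\]
The plan is to post-compose $f$ with the spherical rotation $\phi(w)=(w-f(z))/(1+\overline{f(z)}\,w)$, so that $g:=\phi\circ f$ is univalent and meromorphic on $\D$ with $g(z)=0$ and $g^{\#}(z)=f^{\#}(z)=2|g'(z)|$. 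Applying Corollary \ref{koebedistance} to $g$ on a disk about $z$ where $g$ is holomorphic gives $|g'(z)|\lesssim \dist(0,\partial g(\D))/(1-|z|)$, and the identity $\dist_{\sigma}(0,A)=2\arctan\dist(0,A)$ combined with $\phi$ being a spherical isometry converts this Euclidean bound into the desired spherical one.

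The principal obstacle is the edge case where the spherical antipode of $f(z)$ lies in $D$, so that $g$ has a genuine pole in $\D$ and Corollary \ref{koebedistance} does not apply on all of $\D$. I would handle this by applying Koebe on the largest sub-disk about $z$ on which $g$ is holomorphic and controlling the distance from $z$ to the preimage of the pole in terms of $h_{D}(w_0,f(z))$ and $\dist(0,\C\setminus D)$; this is where the quantitative dependence on $\dist(0,\C\setminus D)$ in the conclusion appears. A complementary compactness argument based on Corollary \ref{koebedistance} and the classical Koebe distortion theorem deals with those $z$ for which either $|z|\le 1/2$ or $\dist_{\sigma}(f(z),\partial D)$ is not small, which closes the argument.
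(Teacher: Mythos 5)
Your overall architecture --- (i) $\Rightarrow$ (C1) via Theorem \ref{hoderivative} and radial integration, the chain (C1) $\Rightarrow$ (C2) $\Rightarrow$ (C3) $\Leftrightarrow$ (iii), and returning to (i) through a pointwise bound on $f^{\#}$ --- is the same as the paper's, and those first steps are fine. The gap is in (iii) $\Rightarrow$ (i), in the ``spherical Koebe'' estimate $f^{\#}(z)\le C'(1-|z|)^{-1}\dist_\sigma(f(z),\partial D)$. After rotating the sphere so that $f(z)\mapsto 0$, Corollary \ref{koebedistance} gives the \emph{Euclidean} bound $|g'(z)|\lesssim\dist(0,\partial g(\D))/(1-|z|)$, and you claim the identity $\dist_\sigma(0,A)=2\arctan\dist(0,A)$ converts this into the spherical bound. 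It does not: that identity yields $\dist_\sigma(0,A)\le 2\dist(0,A)$, which is the direction you do not need, while the needed inequality $\dist(0,\partial g(\D))\le C''\dist_\sigma(0,\partial g(\D))$ fails when $\dist(0,\partial g(\D))=\tan\bigl(\tfrac12\dist_\sigma(f(z),\partial D\cup\{\infty\})\bigr)$ is large, i.e.\ when the spherical distance from $f(z)$ to the boundary approaches $\pi$. In fact the proposed lemma is false with an absolute constant: for $D=\C\setminus[R,\infty)$ and $f(0)=0$, Koebe gives $f^{\#}(0)=2|f'(0)|\asymp R$, while $\dist_\sigma(0,\partial D)\le\pi$; note that here $g=f$ has no pole, so this failure is not the pole issue you flag.

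To repair this you must prove that $\dist_\sigma(f(z),\partial D\cup\{\infty\})$ is bounded away from $\pi$ uniformly in $z$, with a bound depending on $\dist(0,\partial D)$ (play $\infty$ against a finite boundary point of smallest modulus: one of the two is always at spherical distance at most $\pi-\epsilon$ from $f(z)$). This is a second, independent place where the dependence on $\dist(0,\C\backslash D)$ enters, not only in the pole-avoidance step; that step, too, is only sketched, though it can be completed by bounding $h_D(f(z),-1/\overline{f(z)})$ from below via the quasi-hyperbolic metric. The paper sidesteps all of this by never normalizing spherically: it keeps the Euclidean estimate $f^{\#}(z)\le 8\dist(f(z),\partial D)/\bigl((1-|z|)(1+|f(z)|^2)\bigr)$ from Corollary \ref{koebedistance} and compares $\dist(f(z),\partial D)$ with $|f(z)|$ and with the chordal distance to the boundary by a case analysis on whether $0\in D$, whether $z$ is near $f^{-1}(0)$, and whether the chordally nearest boundary point $w$ has $|w|\ge 2|f(z)|$.
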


\begin{proof} First, we prove that (i) implies (ii). Suppose that $D$ is an unbounded $\alpha$-H\"{o}lder domain. It suffices to prove that condition (C1) is satisfied. Let $\tilde{w}$ be a prime end of $D$ and set $e^{i\theta}=f^{-1}(\tilde{w})$ for some $\theta \in [0,2\pi)$. By Theorem \ref{hoderivative} it follows that there is a constant $M>0$ such that, for every $z\in \D$,
\[f^{\#} (z)\le \frac{M}{(1-|z|)^{1-\alpha}}.\]
Let $0<r<1$. If $\gamma(t)=te^{i\theta}$ for $r\le t<1$, then we have
\[
l_{\sigma}(\Gamma (f(re^{i\theta}),f(e^{i\theta})))=\int_{\gamma} f^{\#}(\zeta)|d\zeta| \le M \int_r^1 \frac{dt}{(1-t)^{1-\alpha}} =\frac{M}{\alpha}(1-r)^{\alpha} \]
and thus
\[\frac{1}{\alpha}\log \frac{1}{l_{\sigma}(\Gamma (f(re^{i\theta}),f(e^{i\theta})))}\ge \frac{1}{\alpha}\log\frac{\alpha}{M}+\log \frac{1}{1-r}.\]
So, for every $0<r<1$, we deduce that
\begin{align}
h_{D}(f(0),f(re^{i\theta}))&=h_{\D} (0,re^{i\theta})=\log \frac{1+r}{1-r}\le \log 2 + \log\frac{1}{1-r} \nonumber\\
&\le C+\frac{1}{\alpha}\log \frac{1}{l_{\sigma}(\Gamma (f(re^{i\theta}),f(e^{i\theta})))}, \nonumber
\end{align}
where $C=\log 2+ (1/\alpha)\log (M/\alpha)$. In other words, for every $w\in \Gamma$,
\[h_{D}(w_0,w)\le C+\frac{1}{\alpha}\log \frac{1}{l_{\sigma}(\Gamma (w,\tilde{w}))}.\]

To prove that (ii) implies (iii), it suffices to show that if condition (C3) is satisfied then (iii) follows. Obviously, every $w\in D$ lies on a hyperbolic geodesic in $D$ joining $w_0$ to a prime end $\tilde{w}$ of $D$ and thus, by assumption,
\[h_D(w_0,w)\le C+\frac{1}{\alpha}\log\frac{1}{\dist_\sigma(w,\partial D)}.\]

Finally, we prove that (iii) implies (i). We suppose that there are constants $C\in \mathbb{R}$ and $\alpha\in (0,1]$ such that, for every $w\in D$,
\begin{equation}\label{rel1}
h(w_0,w)\le C +\frac{1}{\alpha}\log \frac{1}{\dist_{\sigma}(w, \partial D)}.
\end{equation}
If $z\in \D$, then
\[h_D(f(0),f(z))=\log\frac{1+|z|}{1-|z|}\ge \log\frac{1}{1-|z|}.\]
This in combination with (\ref{rel1}) implies that, for $z\in \D$,
\begin{align}\label{dsigma}
\dist_{\sigma}(f(z), \partial D)\le e^{\alpha C}(1-|z|) ^{\alpha}.
\end{align}

{\bf Case 1:} We assume that $0\in D$ and we set $z_0=f^{-1}(0)$. Consider the disk $D(z_0,r_0)$, where $r_0={(1-|z_0|)}/{4}$.

{\bf Case 1(a):} Let $z\in D(z_0,r_0)$. By (\ref{sphecho}) and (\ref{dsigma}) we deduce that
\begin{align}
e^{\alpha C}(1-|z_0|) ^{\alpha} &\ge \dist_{\sigma}(0, \partial D)\ge \chi (0, \partial D)=\min \left\{\frac{2|w|}{\sqrt{1+|w|^2}}:w\in \partial D \right\} \nonumber\\
&\ge \frac{2\dist (0,\partial D)}{\sqrt{1+\dist(0,\partial D)^2}} \nonumber
\end{align}
and thus
\[2\dist (0,\partial D)\le e^{\alpha C}(1-|z_0|) ^{\alpha}\sqrt{1+\dist(0,\partial D)^2}. \]
This in conjunction with (\ref{fder}) implies that
\[f^{\#}(z)\le \frac{ 16e^{\alpha C}\sqrt{1+\dist(0,\partial D)^2}}{(1-|z_0|)^{1-\alpha}}.\]
By this and (\ref{r0}) we infer that, for every $z\in D(z_0,r_0)$,
\begin{equation}\label{result1}
f^{\#}(z)\le \frac{20e^{\alpha C}\sqrt{1+\dist(0,\partial D)^2}}{(1-|z|)^{1-\alpha}}.
\end{equation}

{\bf Case 1(b):} Let $z\notin D(z_0,r_0)$. We claim that
\begin{equation}\label{dist0}
{\dist(0,\partial D)}\le 32|f(z)|.
\end{equation} 
Indeed, applying Corollary \ref{koebedistance} on $D(z_0,r_0)$, we have
\begin{align}
\dist (0,\partial f(D(z_0,r_0)))\ge \frac{r_0}{4} |f'(z_0)|=\frac{1}{16}(1-|z_0|)|f'(z_0)| \nonumber.
\end{align}
Also, applying Corollary \ref{koebedistance} on $\D$ we take
\[2|f'(z_0)|(1-|z_0|)\ge \dist (0,\partial D).\]
Combining these results, we derive that 
\[\dist (0,\partial f(D(z_0,r_0)))\ge\frac{\dist (0,\partial D)}{32}.\]
This implies that $D(0,\dist(0,\partial D)/32)\subset f(D(z_0,r_0))$. Given that $f(z)\notin f(D(z_0,r_0))$, this proves (\ref{dist0}). By (\ref{dist0}) and Corollary \ref{koebedistance} it follows that
\begin{align}\label{fz}
f^{\#}(z)&\le \frac{8\dist (f(z),\partial D)}{(1-|z|)(1+|f(z)|^2)}\le 8\frac{\dist(0,\partial D)+|f(z)|}{(1-|z|)(1+|f(z)|^2)} \nonumber\\
&\le  \frac{264}{1-|z|}\frac{|f(z)|}{1+|f(z)|^2}.
\end{align}
Now, we are going to use our assumption (\ref{dsigma}). Let $w\in \partial D$ be a point for which $\chi (f(z),\partial D)=\chi (f(z),w)$. If $|w|\ge 2|f(z)|$ then
\begin{align}
\chi (f(z),w)&\ge \frac{2(|w|-|f(z)|)}{\sqrt{1+|f(z)|^2}\sqrt{1+|w|^2}}\ge \frac{2|f(z)|}{\sqrt{1+|f(z)|^2}\sqrt{1+4|f(z)|^2}}, \nonumber
\end{align}
where we used the fact that the function $g(x)=(x-c)/\sqrt{1+x^2}$ for $c>0$ and $x\ge 2c$ is increasing. This in conjunction with (\ref{sphecho}) and (\ref{dsigma}) gives
\begin{equation}\label{sxx11}
e^{\alpha C}(1-|z|)^{\alpha}\ge \chi (f(z),w)\ge \frac{|f(z)|}{1+|f(z)|^2}.
\end{equation}
Therefore, combining this  with (\ref{fz}) we derive that
\begin{equation}
f^{\#}(z)\le \frac{264e^{\alpha C}}{(1-|z|)^{1-\alpha}}. \nonumber
\end{equation}
If $|w|\le 2|f(z)|$ then (\ref{sphecho}) and (\ref{dsigma}) give
\begin{equation}\label{sxx22}
e^{\alpha C}(1-|z|)^{\alpha}\ge \chi (f(z),w)=\frac{2|f(z)-w|}{\sqrt{1+|f(z)|^2}\sqrt{1+|w|^2}}\ge \frac{\dist (f(z),\partial D)}{1+|f(z)|^2}.
\end{equation}
By this  and Corollary \ref{koebedistance} we have
\begin{align}
f^{\#}(z)\le \frac{8\dist (f(z),\partial D)}{(1-|z|)(1+|f(z)|^2)}\le \frac{8e^{\alpha C}}{(1-|z|)^{1-\alpha}}.\nonumber
\end{align}
Thus, in any case, for $z\notin D(z_0,r_0)$,
\begin{equation}\label{result22}
f^{\#}(z)\le \frac{264e^{\alpha C}}{(1-|z|)^{1-\alpha}}.
\end{equation}

{\bf Case 2:} Suppose that $0\notin D$. Let $z\in \D$. By Corollary \ref{koebedistance} we have
\[
f^{\#}(z)\le \frac{8\dist (f(z),\partial D)}{(1-|z|)(1+|f(z)|^2)}\le \frac{8|f(z)|}{(1-|z|)(1+|f(z)|^2)}.\]
Let $w$ be a point on $\partial D$ for which $\chi (f(z),\partial D)=\chi (f(z),w)$. Now, working as in Case 1(b), we infer that (\ref{result22}) holds for every $z\in \D$.

Finally, combining all the cases above we derive that, for every $z\in \D$,
\[f^{\#}(z)\le\frac{264e^{\alpha C}\sqrt{1+\dist (0, \C\backslash D)^2}}{(1-|z|)^{1-\alpha}}.\]
So, Theorem \ref{hoderivative} implies that $D$ is an unbounded $\alpha$-H\"{o}lder domain.
\end{proof}

\section{Hardy number of unbounded H\"{o}lder domains}\label{application}

\begin{theorem} Let $D\subsetneq \C$ be an unbounded simply connected domain and $0<\alpha \le 1$. If $D$ is an unbounded $\alpha$-H\"{o}lder domain, then
\begin{equation} \label{hardyholder}
{\rm h}(D)\le 1/\alpha
\end{equation}
and this estimate is sharp.
\end{theorem}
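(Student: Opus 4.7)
The plan is to extract a radial lower bound on $|f|$ from the spherical H\"older condition at a boundary point mapped to $\infty$, and compare it with the standard $H^p$ growth estimate. Since (\ref{definition}) makes $f$ uniformly continuous in the spherical metric, $f$ extends continuously to $\overline{\D}\to\C_{\infty}$ with the extension still satisfying (\ref{definition}). The unboundedness of $D$ supplies a sequence $z_n\in\D$ with $|f(z_n)|\to\infty$, and any subsequential limit $\zeta_0\in\partial\D$ satisfies $f(\zeta_0)=\infty$ in the spherical sense. Letting $z_2\to\zeta_0$ in (\ref{definition}) gives $\sigma(f(z),\infty)\le K|z-\zeta_0|^{\alpha}$ for every $z\in\D$, and combining this with $\sigma(f(z),\infty)\ge\chi(f(z),\infty)=2/\sqrt{1+|f(z)|^2}$ coming from (\ref{sphecho}) produces the pointwise lower bound $|f(z)|\ge c\,|z-\zeta_0|^{-\alpha}$ whenever $|f(z)|$ is large, with $c=c(K)>0$. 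Evaluating on the radius $z=r\zeta_0$ then yields
\[
M(r):=\max_{|z|=r}|f(z)|\ \ge\ |f(r\zeta_0)|\ \ge\ c\,(1-r)^{-\alpha}
\]
for all $r$ sufficiently close to $1$.

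On the other hand, if $f\in H^p$, subharmonicity of $|f|^p$ on $B(z,(1-|z|)/2)$ combined with the $H^p$ integral-means bound gives the classical pointwise estimate $|f(z)|\le C_p\|f\|_{H^p}(1-|z|)^{-1/p}$, whence $M(r)\le C_p\|f\|_{H^p}(1-r)^{-1/p}$. Comparing with the lower bound above gives $c\le C_p\|f\|_{H^p}(1-r)^{\alpha-1/p}$, which fails as $r\to 1^-$ whenever $p>1/\alpha$; hence $f\notin H^p$ for every $p>1/\alpha$, and ${\rm h}(D)\le 1/\alpha$.

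For sharpness I would exhibit $f(z)=(1-z)^{-\alpha}$. A direct computation gives
\[
f^{\#}(z)=\frac{2\alpha\,|1-z|^{\alpha-1}}{|1-z|^{2\alpha}+1}\ \le\ 2\alpha\,|1-z|^{\alpha-1}\ \le\ 2\alpha\,(1-|z|)^{\alpha-1},
\]
so by Theorem \ref{hoderivative} the image $D=f(\D)$ is an unbounded $\alpha$-H\"older domain, while the classical fact that $\int_0^{2\pi}|1-re^{i\theta}|^{-\alpha p}\,d\theta$ is bounded in $r$ exactly for $\alpha p<1$ yields ${\rm h}(D)=1/\alpha$.

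The argument is mostly direct, using only (\ref{definition}) and (\ref{sphecho}) and bypassing Theorem \ref{hyho} as well as the full strength of Theorem \ref{hoderivative}; the only mildly delicate step is producing the preimage $\zeta_0$ of $\infty$, which falls out of spherical compactness of $\C_{\infty}$.
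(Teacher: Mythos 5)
Your proof is correct, and for the upper bound it takes a genuinely different and more elementary route than the paper. The paper first passes through its hyperbolic-distance characterization (Theorem \ref{hyho}): it picks a prime end $\zeta$ with $f(\zeta)=\infty$, bounds $h_D(w_0,C_r)$ along the geodesic to $\infty$ by $C+\frac{1}{\alpha}\log\frac{\sqrt{1+r^2}}{2}$ using $\sigma(w_r,\infty)\ge 2/\sqrt{1+r^2}$, and then invokes the formula ${\rm h}(D)=\liminf_{r\to\infty}h_D(w_0,C_r)/\log r$ from \cite{Kar}. You instead work directly with the definition: the same boundary point $\zeta_0$ with $f(\zeta_0)=\infty$ yields, via $\chi\le\sigma$, the radial growth lower bound $|f(r\zeta_0)|\ge c(1-r)^{-\alpha}$, which is incompatible with the classical pointwise estimate $|f(z)|\le C_p\|f\|_{H^p}(1-|z|)^{-1/p}$ once $p>1/\alpha$. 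This bypasses both Theorem \ref{hyho} and the external result from \cite{Kar}, at the cost of not exercising the machinery the paper is advertising; the paper's route also records a quantitative hyperbolic-distance growth estimate along the way, which is of independent interest. For sharpness the two arguments are essentially the same: your $f(z)=(1-z)^{-\alpha}$ is a power of a M\"obius map onto a half-plane (a sector-like domain), shown to be $\alpha$-H\"older via the spherical-derivative criterion of Theorem \ref{hoderivative} exactly as the paper does for the sector $S_{\alpha\pi}$, and the classical fact that $(1-z)^{-\beta}\in H^p$ iff $\beta p<1$ replaces the citation of Hansen's computation ${\rm h}(S_\theta)=\pi/\theta$. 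Two cosmetic points: you should note that $(1-z)^{-\alpha}$ is univalent (it is, being $w\mapsto w^{\alpha}$, univalent on the right half-plane for $0<\alpha\le 1$, composed with the M\"obius map $z\mapsto 1/(1-z)$ of $\D$ onto $\{\operatorname{Re}w>1/2\}$), and the quantifier in ``$|f(z)|\ge c\,|z-\zeta_0|^{-\alpha}$ whenever $|f(z)|$ is large'' should read ``whenever $|z-\zeta_0|$ is small,'' which is the form you actually use on the radius.
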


\begin{proof} Let $f$ be a Riemann mapping from $\D$ onto $D$ and set $f(0)=w_0$. Since $D$ is an unbounded $\alpha$-H\"{o}lder domain, $f$ has a continuous and surjective extension from $\overline{\D}$ onto $\overline{D}\cup \{\infty\}$. Let $\zeta$ be a preimage of $\infty$ under $f^{-1}$ and take the hyperbolic geodesic $\gamma$ in $\D$ joining $0$ to $\zeta$. Then $\Gamma:=f(\gamma)$ is a hyperbolic geodesic joining $w_0$ to $\infty$ in $D$.  If $C_r=\{ z:|z|=r \}$ for $r>0$, then ${\rm h}(D)$ can be found with the aid of the hyperbolic distance ${h_D (w_0,C_r)}$ as follows 
\begin{equation} \label{hardynumber}
{\rm h}(D)=\liminf_{r\to \infty}\frac{h_D (w_0,C_r)}{\log r}
\end{equation}
(see \cite[Theorem 1.1]{Kar}). Let $w_r \in \Gamma \cap C_r$ for $r>|w_0|$. Theorem \ref{hyho} implies that there is a constant $C$ such that
\[h_D (w_0,C_r)\le h_D(w_0,w_r)\le C+\frac{1}{\alpha}\log\frac{1}{\sigma(w_r,\infty)},\]
for every $r>|w_0|$. Also, by (\ref{sphecho}) we have
\[\sigma(w_r,\infty)\ge \chi (w_r,\infty) = \frac{2}{\sqrt{1+|w_r|^2}}=\frac{2}{\sqrt{1+r^2}}.\]
Therefore, for every $r>|w_0|$,
\[h_D (w_0,C_r)\le C+\frac{1}{\alpha}\log\frac{\sqrt{1+r^2}}{2}.\]
This in combination with (\ref{hardynumber}) implies that ${\rm h}(D)\le 1/\alpha$.

To prove that this estimate is sharp we consider a sector domain with opening $\theta\in(0, 2\pi)$,
\[S_{\theta}=\{z:|\Arg z|<\theta/2\}.\]
Then the conformal mapping
\[g(z)=\left(\frac{1+z}{1-z}\right)^{\theta/\pi}\]
maps $\D$ onto $S_{\theta}$. Next, we prove that $S_{\theta}$ is an unbounded $\alpha$-H\"{o}lder domain and we find $\alpha$. For $z\in \D$, we have
\begin{align}\label{gderivative}
g^{\#}(z)=\frac{2|g'(z)|}{1+|g(z)|^2}=\frac{4\theta}{\pi}\frac{|1+z|^{\theta/\pi-1}|1-z|^{\theta/\pi -1}}{|1-z|^{2\theta/\pi}+|1+z|^{2\theta/\pi}}.
\end{align}
Recall that if $a,b,k>0$ then
\[\frac{(a+b)^k}{2^k} \le a^k+b^k.\]
Therefore, for $z\in \D$, we have
\[|1-z|^{2\theta/\pi}+|1+z|^{2\theta/\pi} \ge 2^{-\theta/\pi}(|1-z|^2+|1+z|^2)^{\theta/\pi} = (1+|z|^2)^{\theta/\pi}\ge 1.\]
This and (\ref{gderivative}) imply that, for $z\in \D$,
\begin{equation}\label{thetapi}
g^{\#}(z)\le \frac{4\theta}{\pi}|1+z|^{\theta/\pi-1}|1-z|^{\theta/\pi -1}.
\end{equation}
If $\theta/\pi \ge 1$ then by (\ref{thetapi}) we infer that, for every $z\in \D$,
\[ g^{\#}(z)\le \frac{4^{\theta/\pi}\theta}{\pi}.\]
By Theorem \ref{hoderivative} we deduce that $S_{\theta}$ is an unbounded 1-H\"{o}lder domain. If $0<\theta/\pi < 1$ then, for every $z\in \D$,
\[|1+z|^{1-\theta/\pi}|1-z|^{1-\theta/\pi}=|1-z^2|^{1-\theta/\pi}\ge (1-|z|^2)^{1-\theta/\pi}\ge (1-|z|)^{1-\theta/\pi}\]
and thus by (\ref{thetapi}) we have
\[ g^{\#}(z)\le \frac{4\theta}{\pi}\frac{1}{(1-|z|)^{1-\theta/\pi}}.\]
By Theorem \ref{hoderivative} we derive that $S_{\theta}$ is an unbounded $\theta/\pi$-H\"{o}lder domain.

In conclusion, we have shown that if $0<\theta <\pi$, then $S_{\theta}$ is an unbounded $\theta/\pi$-H\"{o}lder domain and if $\theta\ge \pi$, then $S_{\theta}$ is an unbounded 1-H\"{o}lder domain. Combining this with the fact that ${\rm h}(S_{\theta})=\pi/\theta$ (see \cite{Han}) we conclude that (\ref{hardyholder}) is sharp. 
\end{proof} 

\section{Unbounded H\"{o}lder domains and classical H\"{o}lder domains}\label{defholderbounded}

In this section we show the connection of the definition of unbounded H\"{o}lder domains to the definition of classical (bounded) H\"{o}lder domains. 

Let $D\subsetneq \C$ be an unbounded simply connected domain and $w_0\in D$. We set $r=\dist (w_0,\partial D)/2$, and we consider the function  
\[g(w)=\frac{r}{w-w_0},\]
for $w\in D$. We set $D'=g(D)$ and $D_0=D'\cap D(0,4)$. Note that $D'$ is a simply connected domain in $\C_{\infty}$ that contains infinity and its boundary is bounded, and $D_0$ is a bounded doubly connected domain (see Fig.\ \ref{generalfig}). Let $0<\alpha \le 1$. In this section we prove that \textit{``$D$ is an unbounded H\"{o}lder domain if and only if $D_0$ is a (bounded) H\"{o}lder domain".}

Before showing it, we establish two auxiliary lemmas.

\begin{figure}
	\begin{overpic}[width=\linewidth]{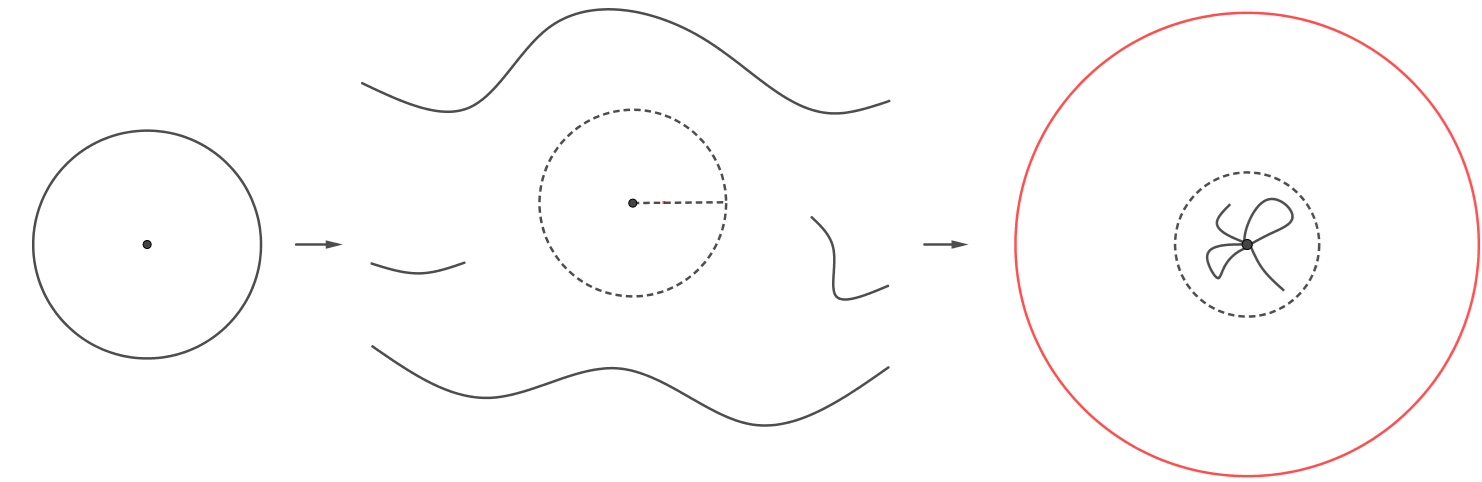}
		\put (31,7) {$D$}
		\put (40,20) {$w_0$}
		\put (45,17) {$r$}
		\put (63,18) {$g$}
		\put (83.5,13) {$0$}
		\put (20,18) {$f$}
		\put (9,17) {$0$}
		\put (95,1) {$D'$}
		\put (80,2) {$D_0$}
	\end{overpic}
	\caption{The domains $D,D'$ and $D_0$.}
	\label{generalfig}	
\end{figure}

\begin{lemma}\label{coro} With the notation defined above, there are constants $c_1,c_2>0$ depending on $|w_0|$ and $r$ such that, for every $w\in D$,
\[c_1\dist_{\sigma} (w,\partial D) \le \dist_{\sigma}(g(w),\partial D')\le c_2\dist_{\sigma} (w,\partial D).\]
\end{lemma}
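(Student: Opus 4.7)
The plan is to view $g$ as a Möbius automorphism of the Riemann sphere $\C_{\infty}$ (sending $w_0\mapsto\infty$ and $\infty\mapsto 0$) and to show that such a map is bi-Lipschitz with respect to the spherical metric, with quantitative constants depending only on $|w_0|$ and $r$. Once this is established, the lemma follows by noting that $g(\partial D)=\partial D'$ on the sphere (using that $g$ is a homeomorphism of $\C_{\infty}$) and taking infima over boundary points.

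Concretely, I would first compute the spherical derivative directly. From $g'(w)=-r/(w-w_0)^2$ and $|g(w)|^2=r^2/|w-w_0|^2$ one gets
\[g^{\#}(w)=\frac{2|g'(w)|}{1+|g(w)|^2}=\frac{2r}{|w-w_0|^2+r^2}.\]
Writing $\rho_{\sigma}(w)=2/(1+|w|^2)$ for the density of the spherical metric on $\C$, the key quantity is the ratio
\[\frac{g^{\#}(w)}{\rho_{\sigma}(w)}=\frac{r(1+|w|^2)}{|w-w_0|^2+r^2}.\]
Using the elementary estimates $|w-w_0|^2\le 2|w|^2+2|w_0|^2$ and $|w|^2\le 2|w-w_0|^2+2|w_0|^2$, a short calculation shows that this ratio lies between two strictly positive constants $\ell,L$ that depend only on $|w_0|$ and $r$. (Equivalently, the ratio extends continuously to a positive function on the compact sphere $\C_{\infty}$, so boundedness is automatic by compactness; the explicit inequalities provide the quantitative values.)

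From $\ell\,\rho_{\sigma}(w)\le g^{\#}(w)\le L\,\rho_{\sigma}(w)$ the bi-Lipschitz property follows by integration: for any rectifiable curve $\gamma$,
\[\ell\,l_{\sigma}(\gamma)\le l_{\sigma}(g\circ\gamma)=\int_{\gamma} g^{\#}(w)\,|dw|\le L\,l_{\sigma}(\gamma).\]
Taking infima over curves joining pairs of points, we obtain $\ell\,\sigma(a,b)\le \sigma(g(a),g(b))\le L\,\sigma(a,b)$ for all $a,b\in\C_{\infty}$. Applying this with $a=w\in D$ and $b=\zeta\in\partial D$ (in $\C_{\infty}$, so possibly $\zeta=\infty$) and taking the infimum over $\zeta$ then yields $c_1\dist_{\sigma}(w,\partial D)\le \dist_{\sigma}(g(w),\partial D')\le c_2\dist_{\sigma}(w,\partial D)$, since $g$ is a bijection of $\C_{\infty}$ with $g(\partial D)=\partial D'$.

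The main obstacle is careful bookkeeping of the point at infinity: one has to interpret $\dist_{\sigma}$ and the boundary $\partial D$ as living on $\C_{\infty}$, so that $\infty\in\partial D$ (because $D$ is unbounded) corresponds naturally to $g(\infty)=0\in\partial D'$, and conversely $w_0\in D$ corresponds to $\infty\in D'$. Once this identification is properly set up, the entire argument reduces to the explicit spherical derivative estimate above, and the constants $c_1,c_2$ can be read off from $\ell$ and $L$.
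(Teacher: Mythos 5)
Your proposal is correct and follows essentially the same route as the paper: both arguments reduce to showing that the quantity $r(1+|w|^2)/(r^2+|w-w_0|^2)$ is bounded above and below by positive constants depending only on $r$ and $|w_0|$ --- the paper reaching it through the chordal-distance comparison \eqref{sphecho} and the identity $|g(z)-g(w)|=r|z-w|/(|z-w_0|\,|w-w_0|)$, and you through the spherical derivative $g^{\#}$ and integration along curves. Your explicit bookkeeping of the point at infinity (so that $g(\infty)=0\in\partial D'$) is a detail the paper leaves implicit, and your infinitesimal version yields slightly cleaner constants by avoiding the $\pi/2$ losses from passing between $\sigma$ and $\chi$.
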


\begin{proof} By (\ref{sphecho}) we derive that, for $z,w\in D$,
\begin{align}\label{ratio}
\frac{\sigma(z,w)}{\sigma(g(z),g(w))}&\ge \frac{2}{\pi} \frac{|z-w|}{\sqrt{1+|z|^2}\sqrt{1+|w|^2}}\frac{\sqrt{1+|g(z)|^2}\sqrt{1+|g(w)|^2}}{|g(z)-g(w)|} \nonumber\\
&=\frac{2}{\pi} \frac{\sqrt{r^2+|z-w_0|^2}\sqrt{r^2+|w-w_0|^2}}{r\sqrt{1+|z|^2}\sqrt{1+|w|^2}}
\end{align}
and, similarly,
\begin{align}\label{ratio2}
\frac{\sigma(z,w)}{\sigma(g(z),g(w))}\le \frac{\pi}{2} \frac{\sqrt{r^2+|z-w_0|^2}\sqrt{r^2+|w-w_0|^2}}{r\sqrt{1+|z|^2}\sqrt{1+|w|^2}}.
\end{align}
By (\ref{ratio}) and (\ref{ratio2}) it suffices to prove that, for every $w\in D$, the fraction
\begin{equation}\label{bounds}
\frac{r^2+|w-w_0|^2}{r(1+|w|^2)}
\end{equation}
is bounded from above and below by positive constants that depend only on $r$ and $|w_0|$. First, suppose $w_0=0$. Then, for every $w\in D$,
\begin{equation}\label{uplow1}
\frac{\min \{r,1\}^2}{r}\le \frac{r^2+|w|^2}{r(1+|w|^2)}\le \frac{\max \{r,1\}^2}{r}.
\end{equation}
Now, suppose that $|w_0|> 0$. If $\frac{r}{r+1}|w|>|w_0|$, then
\[\frac{|w|}{r+1}<|w|-|w_0|\le |w-w_0|\le |w|+|w_0|\le \frac{2r+1}{r+1} |w|.\]
Hence, we deduce that
\begin{equation}\label{uplow2}
\frac{1}{r}\min \left\{r, \frac{1}{r+1} \right\}^2\le \frac{r^2+|w-w_0|^2}{r(1+|w|^2)}\le \frac{1}{r}\max \left\{r,\frac{2r+1}{r+1} \right\}^2.
\end{equation}
If $\frac{r}{r+1}|w|\le|w_0|$, then
\begin{equation}\label{uplow3}
\frac{r}{1+\left( \frac{r+1}{r}\right)^2|w_0|^2} \le \frac{r^2+|w-w_0|^2}{r(1+|w|^2)}\le \frac{r^2+\left(\frac{2r+1}{r} \right)^2|w_0|^2}{r}.
\end{equation}
Taking the maximum of the upper bounds in (\ref{uplow1}), (\ref{uplow2}) and (\ref{uplow3}) and the minimum of the lower bounds we infer that, for every $w\in D$, the fraction (\ref{bounds}) is bounded by positive constants depending on $r$ and $|w_0|$.
\end{proof}

\begin{lemma}\label{lemmahyqua} With the notation defined before, for every $y \in D_0$,
\[\lambda_{D'}(y)\delta_{D'}(y)\ge 1/300,\]
where $\lambda_{D'}(y)$ is the hyperbolic density and $\delta_{D'}(y)=\dist (y,\partial D')$.
\end{lemma}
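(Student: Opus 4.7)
The plan is to conjugate by the M\"obius transformation $\phi(y) = 1/y$ and reduce the statement to the classical Koebe one-quarter theorem for a simply connected subdomain of $\C$. Before that, I would record two geometric facts about $D'$ that follow directly from $g(w) = r/(w-w_0)$ with $r = \dist(w_0,\partial D)/2$: first, for every $y \in \partial D$ we have $|g(y)| = r/|y-w_0| \le r/\dist(w_0,\partial D) = 1/2$, and $g(\infty)=0$, so $\partial D' \subset \overline{D(0,1/2)}$ and $0 \in \partial D'$; second, the infimum $\dist(w_0,\partial D)$ is attained at some $y^* \in \partial D$, yielding a boundary point $w^0 := g(y^*) \in \partial D'$ with $|w^0| = 1/2$.

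The map $\phi$ carries $D'$ conformally onto $\Omega := \phi(D')$. Since $\infty \in D'$ and $0 \in \partial D'$, the image $\Omega$ is a proper simply connected subdomain of $\C$. Conformal invariance of the hyperbolic density gives $\lambda_{D'}(y) = \lambda_\Omega(1/y)\,|\phi'(y)| = \lambda_\Omega(1/y)/|y|^2$, and Koebe's one-quarter theorem applied to $\Omega$ gives $\lambda_\Omega(v) \ge 1/(2\delta_\Omega(v))$. Combining,
\[\lambda_{D'}(y)\delta_{D'}(y) \ge \frac{\delta_{D'}(y)}{2|y|^2\,\delta_\Omega(1/y)}.\]
The remaining task is to bound $\delta_\Omega(1/y)$ from above. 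The key identity is $|1/y - 1/w| = |y-w|/(|y||w|)$ for any $w \in \partial D'\setminus\{0\}$; what remains is a good choice of $w$.

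I would then split into two cases for $y \in D_0$, where $|y|<4$. When $\delta_{D'}(y) \le |y|/4$, the nearest boundary point $w^* \in \partial D'$ satisfies $|w^*| \ge |y| - \delta_{D'}(y) \ge 3|y|/4 > 0$; taking $w=w^*$ yields $\delta_\Omega(1/y) \le 4\delta_{D'}(y)/(3|y|^2)$, and hence $\lambda_{D'}(y)\delta_{D'}(y) \ge 3/8$. When $\delta_{D'}(y) > |y|/4$, the nearest boundary point to $y$ might be $0$, so I would instead use the alternative point $w^0$ with $|w^0|=1/2$; since $|y-w^0| \le |y|+1/2$, this gives $\delta_\Omega(1/y) \le 2 + 1/|y|$, and after substitution,
\[\lambda_{D'}(y)\delta_{D'}(y) > \frac{1}{8(2|y|+1)} \ge \frac{1}{72} > \frac{1}{300},\]
where the last step uses $|y|<4$.

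The delicate point, and the reason for the two-case split, is the regime where $y$ lies close to $0 \in \partial D'$ and its nearest boundary point may itself be $0$: there the naive estimate for $\delta_\Omega(1/y)$ through the nearest point collapses. The existence of the alternative boundary point $w^0$ with $|w^0| = 1/2$, forced by the specific normalization $r = \dist(w_0,\partial D)/2$ built into the definition of $g$, is precisely what supplies the uniform bound needed to close the argument.
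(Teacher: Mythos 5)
Your proof is correct, but it takes a genuinely different route from the paper's. The paper works on the disk side, through the map $h=g\circ f$ (which has a pole at $0$): it applies the Koebe distance estimate on disks $D(z,1-|z|)$ and $D(z,|z|)$ contained in $\D\setminus\{0\}$ to obtain $\lambda_{D'}(h(z))\,\delta_{D'}(h(z))\ge 1/300$ for $z$ in the annulus $\Delta=\{0.01<|z|<1\}$, and then needs a second distortion argument to verify the inclusion $D_0\subset h(\Delta)$. You instead stay on the image side: inverting by $\phi(y)=1/y$ sends $D'$ (simply connected, containing $\infty$, with $0\in\partial D'$ since $D$ is unbounded) onto a simply connected proper subdomain $\Omega\subset\C$, where the standard Koebe consequence $\lambda_\Omega\delta_\Omega\ge 1/2$ applies (your constant matches the paper's normalization $\lambda_\D(z)=2/(1-|z|^2)$), and you transfer back through the explicit Jacobian $|\phi'(y)|=1/|y|^2$. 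The only delicate step, correctly identified and handled, is the choice of comparison point in $\partial D'\setminus\{0\}$: the nearest boundary point when $\delta_{D'}(y)\le |y|/4$ (so that it stays away from $0$), and otherwise the point $w^0$ with $|w^0|=1/2$, whose existence is exactly what the normalization $r=\dist(w_0,\partial D)/2$ guarantees; your arithmetic gives $3/8$ and $1/(8(2|y|+1))\ge 1/72$ in the two cases, and the restriction to $D_0$ enters only through $|y|<4$, just as in the paper. What your route buys: no Riemann map, no auxiliary annulus, no covering step $D_0\subset h(\Delta)$, and a better constant ($1/72$ versus $1/300$). What the paper's route buys: it reuses only the distortion tools already set up in the preliminaries and keeps all estimates in the unit disk, in the same framework as the rest of Section 6; for the lemma itself, your argument is shorter and self-contained.
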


\begin{proof} Let $f$ be a Riemann mapping from $\D$ onto $D$ with $f(0)=w_0$. Also, let $h=g\circ f$ and set $\Delta=\{z:0.01<|z|<1\}$. First, we prove that 
\[\lambda_{D'}(y)\delta_{D'}(y)\ge 1/300,\]
for every $y \in h(\Delta)$. For $1/2<|z|<1$, we apply Corollary \ref{koebedistance} on the disk $D(z,1-|z|)\subset \D\backslash \{0\}$ and we infer that
\begin{equation}\label{boundary}
|h'(z)|(1-|z|)\le 4\dist (h(z),\partial D').
\end{equation}
For $1/100<|z|\le 1/2$, we apply  Corollary \ref{koebedistance} on the disk $D(z,|z|)\subset \D\backslash \{0\}$ and we have
\[|h'(z)||z|\le 4\dist (h(z),\partial D').\]
Since $|z|>1/100$ we have
\begin{equation}\label{zero}
|h'(z)|\le 400\dist (h(z),\partial D').
\end{equation}
For every $z\in \D\backslash \{0\}$, we have
\begin{equation}\label{metric}
\lambda_{D'}(h(z))|h'(z)|=\lambda_{\D}(z)=\frac{2}{1-|z|^2}
\end{equation}
(see \cite{Bea}). Therefore, if $1/2<|z|<1$, by (\ref{boundary}) and (\ref{metric}) we deduce that
\begin{align}
\lambda_{D'}(h(z))\ge \frac{1}{(1-|z|)|h'(z)|}\ge \frac{1}{4\dist (h(z),\partial D')}=\frac{1}{4\delta_{D'}(h(z))} \nonumber
\end{align}
and if $1/100<|z|\le 1/2$, then (\ref{zero}) and (\ref{metric}) imply that
\begin{align}
\lambda_{D'}(h(z))\ge \frac{2}{(1+|z|)|h'(z)|}\ge \frac{1}{300\dist (h(z),\partial D')}=\frac{1}{300\delta_{D'}(h(z))}. \nonumber
\end{align} 
Combining the relations above, we conclude that, for every $z\in \Delta$,
\[\lambda_{D'}(h(z))\delta_{D'}(h(z))\ge 1/300\]
and thus, for every $y\in h(\Delta)$,
\begin{equation}\label{firststep}
\lambda_{D'}(y)\delta_{D'}(y)\ge 1/300.
\end{equation}

Now, we show that  $D_0 \subset h(\Delta)$. Recall that $2r=\dist (w_0,\partial D)$. Since $f$ is conformal on $\D$, by Corollary \ref{koebedistance} we infer that
\[{|f'(0)|}\le 8r.\]
Moreover, by Theorem \ref{koebedistortion}, for $|z|\le 0.01$, we have
\[|f(z)-w_0|\le |f'(0)|\frac{100}{99^2}.\]
Combining the relations above we deduce that, for $|z|\le 0.01$,
\[|f(z)-w_0|\le r\frac{800}{99^2}<\frac{r}{4}.\]
Thus, $f(\overline{D}(0,0.01))\subset D(w_0,r/4)$. This implies that $D\backslash \overline{D}(w_0,r/4) \subset f(\Delta)$ and hence $D_0 \subset h(\Delta)$. This and (\ref{firststep}) gives the desired result.
\end{proof}

\begin{theorem}\label{lastresult} With the notation defined before, $D$ is an unbounded H\"{o}lder domain if and only if $D_0$ is a (bounded) H\"{o}lder domain. 
\end{theorem}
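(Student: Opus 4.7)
The plan is to express both the ``unbounded H\"older'' property of $D$ and the ``bounded H\"older'' property of $D_0$ in hyperbolic/quasihyperbolic terms and to bridge them through the M\"obius map $g$. Since $g$ is a M\"obius transformation of $\mathbb{C}_\infty$, it preserves the hyperbolic distance, so $h_D(w_0,w)=h_{D'}(\infty,g(w))$, and Lemma~\ref{coro} gives $\sigma(w,\partial D)\asymp\sigma(g(w),\partial D')$. Consequently, Theorem~\ref{hyho} translates to the following equivalent formulation of our hypothesis on $D$: there exists $C$ such that
\[
h_{D'}(\infty,y)\;\le\;C+\frac{1}{\alpha}\log\frac{1}{\sigma(y,\partial D')},\qquad y\in D'.
\]
Because $\partial D'\subset \overline{D}(0,1/2)$ and $D_0\subset D(0,4)$, the chordal formula gives $\sigma(y,\partial D')\asymp \dist(y,\partial D')$ on $D_0$, while $\dist(y,\partial D_0)=\min(\dist(y,\partial D'),4-|y|)\le \dist(y,\partial D')$ with equality near the inner boundary $\partial D'$.

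Fix a basepoint $w_0^*\in D_0$, say $w_0^*=2$, which lies in $D_0$ since $\{|z|>1/2\}\subset D'$. For the direction ``$D$ unbounded H\"older $\Rightarrow$ $D_0$ bounded H\"older'', the triangle inequality $h_{D'}(w_0^*,y)\le h_{D'}(w_0^*,\infty)+h_{D'}(\infty,y)$ applied to the equivalent condition yields $h_{D'}(w_0^*,y)\le C_1+(1/\alpha)\log(1/\dist(y,\partial D'))$ on $D_0$. Lemma~\ref{lemmahyqua} then provides $1/\dist(w,\partial D')\le 300\,\lambda_{D'}(w)$ for $w\in D_0$, so by integrating along a rectifiable curve in $D_0$ based on the hyperbolic geodesic of $D'$ from $w_0^*$ to $y$ (with any excursions through $\{|z|>4\}$ replaced by an arc of $\partial D(0,4-\varepsilon)$ of bounded length), I convert this hyperbolic bound into a quasihyperbolic one, obtaining $k_{D_0}(w_0^*,y)\le C_2+c_2\log(1/\dist(y,\partial D_0))$, i.e.\ \eqref{def_qh}. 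For $y$ close to the outer circle $\{|z|=4\}$, I would verify the bound directly by integrating along a radial path, whose quasihyperbolic length is $O(\log(1/(4-|y|)))$, which matches the same form.

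For the converse ``$D_0$ H\"older $\Rightarrow$ $D$ H\"older'', I plan to use the companion upper estimate $\lambda_{D'}(w)\le C/\dist(w,\partial D')$ on $D_0$, which follows from a Koebe argument on the Riemann map of $D'$ applied to the preimage of $D_0$ along the lines of Lemma~\ref{lemmahyqua}. Combined with $\dist(w,\partial D')\ge \dist(w,\partial D_0)$, this yields $\lambda_{D'}\le C/\dist(\,\cdot\,,\partial D_0)$ on $D_0$; integrating along the quasihyperbolic geodesic of $D_0$ from $w_0^*$ to $y$ gives $h_{D'}(w_0^*,y)\le C\,k_{D_0}(w_0^*,y)$, and the triangle inequality brings $\infty$ back. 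For $y\in D'\setminus D_0$ (i.e.\ $|y|\ge 4$) the required bound is automatic, since $h_{D'}(\infty,y)$ stays uniformly bounded while $\log(1/\sigma(y,\partial D'))\to+\infty$. Together with the first paragraph's equivalence, this will yield that $D$ is an unbounded H\"older domain. The main obstacle is the topological mismatch between the doubly connected $D_0$ and the simply connected (in $\mathbb{C}_\infty$) domain $D'$: hyperbolic geodesics of $D'$ may take shortcuts through the $\infty$-neighborhood $\{|z|>4\}\subset D'$ that are unavailable in $D_0$, and the additional outer boundary $\{|z|=4\}$ of $D_0$ has no counterpart in $\partial D$, so the argument must confirm that such deviations contribute only a bounded additive constant and that the extra boundary produces only an admissible logarithmic term.
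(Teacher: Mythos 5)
Your strategy is the same as the paper's: pass to $D'$ via the M\"obius map $g$, use Theorem~\ref{hyho} and Lemma~\ref{coro} to restate the hypothesis as a hyperbolic bound in $D'$, compare $h_{D'}$ with $k_{D_0}$ via Lemma~\ref{lemmahyqua} in one direction and the elementary bound $\lambda_{D'}\le 2/\delta_{D'}\le 2/\delta_{D_0}$ in the other, and treat the artificial outer circle separately (the paper reroutes the $D'$-geodesic along $C(0,2)$ and uses radial segments near $C(0,4)$). Two steps, however, do not work as written. In the forward direction, replacing only the excursions of the geodesic through $\{|z|>4\}$ by arcs of $\{|z|=4-\varepsilon\}$ leaves geodesic pieces in the annulus $4-\varepsilon<|z|<4$, where $\delta_{D_0}(\zeta)\le 4-|\zeta|$ is small while $\lambda_{D'}(\zeta)$ need not be large, so $\delta_{D_0}^{-1}\le c\,\lambda_{D'}$ fails there and the quasihyperbolic length of those pieces is not controlled by $h_{D'}(w_0^*,y)$; moreover the geodesic may cross $C(0,4)$ many times, so excursion-by-excursion replacement can accumulate. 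The repair is the paper's construction: keep only the connected initial subarc from the basepoint to its \emph{first} hit of a fixed interior circle (the paper uses $C(0,2)$) and the connected final subarc from $y$ (or from the foot of a radial segment ending at $y$) to its first hit, discard everything in between, and bridge with a single arc of that circle; on the kept subarcs $\delta_{D_0}$ is comparable to $\delta_{D'}$, and only the radial segment produces the admissible $\log\frac{1}{4-|y|}$ term.

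In the converse direction your chain gives $h_{D'}(\infty,y)\le C+C'\log\frac{1}{\dist(y,\partial D_0)}$, whereas the target involves $\log\frac{1}{\dist_\sigma(y,\partial D')}$; near $C(0,4)$ one has $\dist(y,\partial D_0)=4-|y|\ll\dist(y,\partial D')\asymp \dist_\sigma(y,\partial D')$, so the substitution is in the wrong direction and is not automatic. Also, your justification for $|y|\ge 4$ is incorrect: since $\partial D'\subset\overline{D}(0,1/2)$, the quantity $\dist_\sigma(y,\partial D')$ tends to a \emph{positive constant} as $y\to\infty$, so $\log\frac{1}{\dist_\sigma(y,\partial D')}$ does not tend to $+\infty$. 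Both points are fixed by one observation, which is also what actually saves the case $|y|\ge4$: $\{|z|>1/2\}\cup\{\infty\}\subset D'$, so by monotonicity (conjugating by $z\mapsto 1/(2z)$) $h_{D'}(\infty,y)$ is uniformly bounded for, say, $|y|\ge 1$; hence the target inequality holds trivially (after enlarging $C$) whenever the nearest point of $\partial D_0$ to $y$ lies on $C(0,4)$ or $|y|\ge4$, while on the remaining set $\dist(y,\partial D_0)=\dist(y,\partial D')\asymp\dist_\sigma(y,\partial D')$ and your substitution is legitimate. This is precisely the role of the paper's restriction to $z\in D'\cap\overline{\D}$ together with its direct estimate for $w\in D(w_0,r)$. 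With these two repairs your argument becomes essentially the paper's proof.
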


\begin{proof} Suppose that $D$ is an unbounded H\"{o}lder domain. We fix $w^* \in D\backslash \overline{D}(w_0,r)$. Theorem \ref{hyho} implies that there are $\alpha\in (0,1]$ and $C\in \mathbb{R}$ such that, for every $w\in D$,
\[h_D(w^*,w)\le C+\frac{1}{\alpha}\log\frac{1}{\dist_\sigma(w,\partial D)}.\]
Let $w\in D\backslash \overline{D}(w_0,r/4)$ and set $g(w)=z$ and $g(w^*)=z^*$. Then $z^*\in D_0 \cap \D$ and $z\in D_0$ (see Fig.\ \ref{cases12}). By Lemma \ref{coro}, (\ref{sphecho}) and the fact that $\partial D',\{ z \} \subset D(0,4)$, we infer that
\[ \dist_\sigma(w,\partial D)\ge \frac{1}{c_2} \dist_\sigma(z,\partial D') \ge \frac{2}{17c_2}\dist (z,\partial D')\ge \frac{2}{17c_2}\dist (z,\partial D_0),\]
for some constant $c_2$ depending on $r$ and $|w_0|$. Moreover, $h_D(w^*,w)=h_{D'}(z^*,z)$. So, combining these relations we deduce that, for every $z\in D_0$,
\begin{equation}\label{general}
h_{D'}(z^*,z)\le C'+\frac{1}{\alpha}\log\frac{1}{\dist (z,\partial D_0)},
\end{equation}
where $C'= C+(1/\alpha)\log (17c_2/2)$. Now, let $\Gamma$ be the hyperbolic geodesic joining $z^*$ to $z$ in $D'$. If $\Gamma \subset D_0$ then by Lemma \ref{lemmahyqua} we have
\begin{equation}\label{cas1}
k_{D'}(z^*,z)\le 300h_{D'} (z^*,z).
\end{equation}
If $\Gamma \not\subset D_0$, then we consider the following cases.

{\bf Case 1:} If $z\in D_0 \cap \overline{D}(0,2)$ then $\Gamma=\gamma_1 \cup \gamma_2 \cup \gamma_3$, where $\gamma_1$ is the connected subarc of $\Gamma$ joining $z^*$ to $C(0,2)$ and  $\gamma_2$ is the connected subarc of $\Gamma$ joining $z$ to $C(0,2)$ and $\gamma_3 =\Gamma \backslash (\gamma_1 \cup \gamma_2)$. See Fig.\ \ref{cases12} (left). Let $\gamma$ be the shortest arc on $C(0,2)$ joining the endpoints of $\gamma_1$ and $\gamma_2$ (see  Fig.\ \ref{cases12}). Since, for every $\zeta\in \gamma$, $\dist (\zeta, \partial D_0)\ge 1$,  then

\[\int_{\gamma}\delta^{-1}_{D_0}(\zeta)|d\zeta|=\int_{\gamma} \frac{|d\zeta|}{\dist (\zeta, \partial D_0)}\le l(\gamma)\le 4\pi.\]
Also, if $\zeta\in \gamma_1$ or $\gamma_2$ then $\dist (\zeta,\partial D_0)=\dist (\zeta,\partial D')$. So, by these results and Lemma \ref{lemmahyqua} we have
\begin{align}\label{less2} 
k_{D_0} (z^*,z)&\le \int_{\gamma_1}\delta^{-1}_{D_0}(\zeta)|d\zeta|+\int_{\gamma_2}\delta^{-1}_{D_0}(\zeta)|d\zeta|+\int_{\gamma}\delta^{-1}_{D_0}(\zeta)|d\zeta| \nonumber\\
&\le \int_{\gamma_1}\delta^{-1}_{D'}(\zeta)|d\zeta|+\int_{\gamma_2}\delta^{-1}_{D'}(\zeta)|d\zeta|+4\pi \nonumber\\
&\le 4\pi+300\left(\int_{\gamma_1}\lambda_{D'}(\zeta)|d\zeta|+\int_{\gamma_2}\lambda_{D'}(\zeta)|d\zeta|\right) \nonumber\\
&\le 4\pi+300 h_{D'} (z^*,z).
\end{align}

\begin{figure}
	\begin{overpic}[width=11cm]{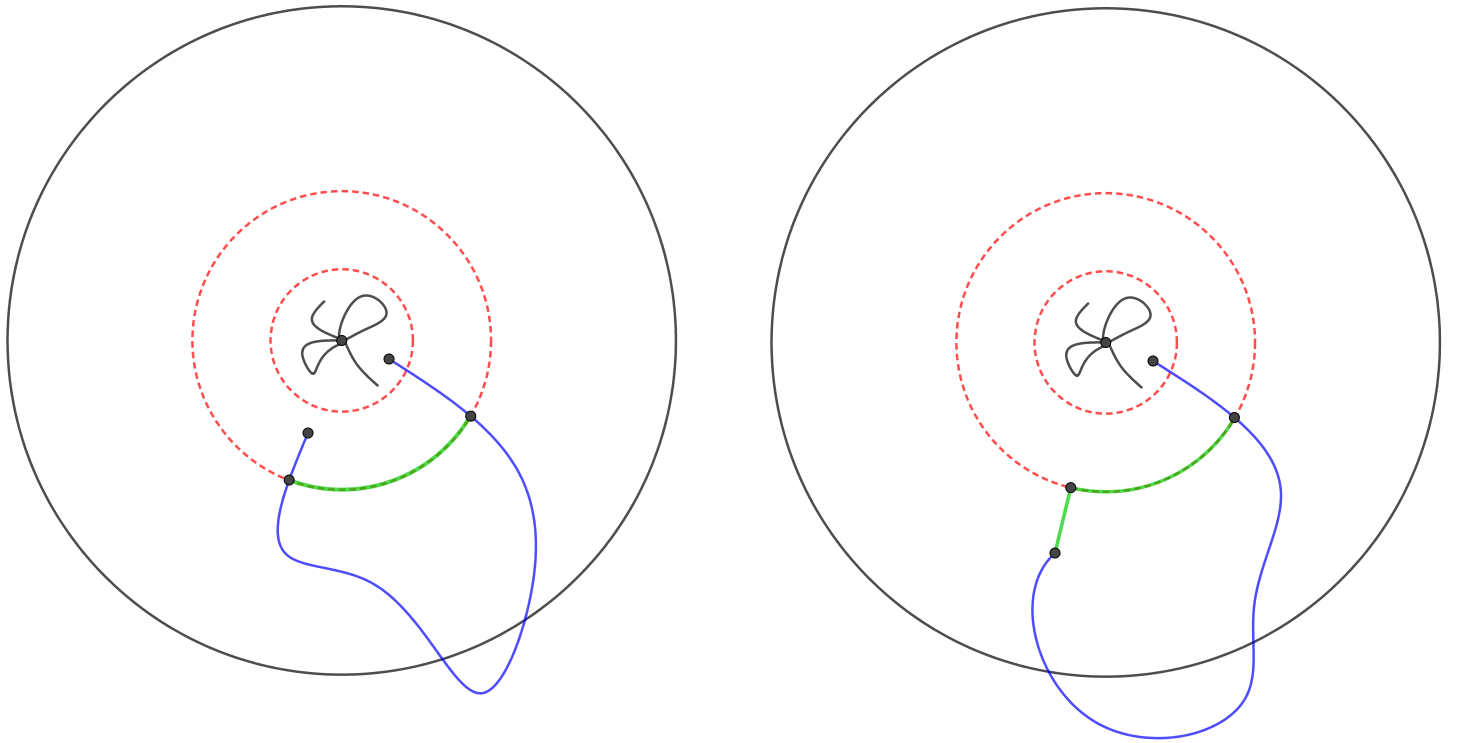}
		\put (26,27) {$z^*$}
		\put (77,26.5) {$z^*$}
		\put (21.5,20.5) {$z$}
		\put (72,11) {$z$}
		\put (16,20) {\textcolor{blue}{$\gamma_2$}}
		\put (27,22) {\textcolor{blue}{$\gamma_1$}}
		\put (25,15) {\textcolor{ForestGreen}{$\gamma$}}
		\put (29,8) {\textcolor{blue}{$\gamma_3$}}
		\put (78,22) {\textcolor{blue}{$\gamma_1$}}
		\put (81,8) {\textcolor{blue}{$\gamma_2$}}
		\put (78,15) {\textcolor{ForestGreen}{$\gamma$}}
		\put (68,14) {\textcolor{ForestGreen}{$\gamma'$}}
	\end{overpic}
	\caption{Left: Case 1. Right: Case 2.}
	\label{cases12}	
\end{figure}

{\bf Case 2:} If $z\in D_0\backslash \overline{D}(0,2)$ (and thus $2< |z|<4$), then $\Gamma=\gamma_1 \cup \gamma_2 $ where $\gamma_1$ is the connected subarc of $\Gamma$ joining $z^*$ to $C(0,2)$ and $\gamma_2 =\Gamma \backslash \gamma_1$. See Fig.\ \ref{cases12} (right). Let $\gamma'$ be the radial segment joining $z$ to $C(0,2)$ and $\gamma$ be the shortest arc on $C(0,2)$ joining the endpoints of $\gamma_1$ and $\gamma'$ (see Fig.\ \ref{cases12}). If $3<|z|<4$, then
\begin{align}
\int_{\gamma}\delta^{-1}_{D_0}(\zeta)|d\zeta|+\int_{\gamma'}\delta^{-1}_{D_0}(\zeta)|d\zeta|&=\int_{\gamma} \frac{|d\zeta|}{\dist (\zeta, \partial D_0)}+\int_{\gamma'} \frac{|d\zeta|}{\dist (\zeta, \partial D_0)} \nonumber\\
&\le 4\pi +\int_2^3 dx +\int_3^{|z|}\frac{dx}{4-x}\nonumber\\
&=4\pi+1+\log\frac{1}{4-|z|} \nonumber\\
&=1+4\pi +\log\frac{1}{\dist (z,\partial D_0)}.
\end{align}
Hence, this and Lemma \ref{lemmahyqua} give
\begin{align}\label{more2}
k_{D_0} (z^*,z)&\le \int_{\gamma}\delta^{-1}_{D_0}(\zeta)|d\zeta|+\int_{\gamma'}\delta^{-1}_{D_0}(\zeta)|d\zeta|+\int_{\gamma_1}\delta^{-1}_{D_0}(\zeta)|d\zeta| \nonumber\\
&\le 1+4\pi +\log\frac{1}{\dist (z,\partial D_0)}+\int_{\gamma_1}\delta_{D'}^{-1}(\zeta)|d\zeta| \nonumber\\
&\le 1+4\pi +\log\frac{1}{\dist (z,\partial D_0)}+300\int_{\gamma_1}\lambda_{D'}(\zeta)|d\zeta| \nonumber\\
&\le 1+4\pi +\log\frac{1}{\dist (z,\partial D_0)}+300h_{D'} (z^*,z).
\end{align}
If $2<|z|\le 3$, then working as before we have
\begin{equation}\label{bet23}
 k_{D_0} (z^*,z)\le 1+4\pi +300h_{D'} (z^*,z).
\end{equation}

Concluding, (\ref{cas1}), (\ref{less2}) and (\ref{bet23}) imply that, for all $z\in D_0 \cap \overline {D}(0,3)$,
\[k_{D_0} (z^*,z)\le 1+4\pi +300h_{D'} (z^*,z)\]
and by (\ref{cas1}) and (\ref{more2}) we have that, for every $z\in D_0 \backslash \overline {D}(0,3)$,
\[k_{D_0} (z^*,z)\le 1+4\pi +\log\frac{1}{\dist (z,\partial D_0)}+300h_{D'} (z^*,z).\]
These relations in combination with (\ref{general}) give that, for every $z\in D_0$,
\[k_{D_0} (z^*,z)\le C_1\log\frac{1}{\dist (z,\partial D_0)}+C_2,\]
where  $C_2=2(1+4\pi +300C')$ and $C_1=1+600/\alpha$. So, $D_0$ is a H\"{o}lder domain.

Conversely, suppose that $D_0$ is a H\"{o}lder domain (with the classical definition with the quasi-hyperbolic distance). We fix $z^*\in D_0\backslash \D$ (see Fig.\ \ref{fig1}). By definition, there are constants $C_1>0$ and $C_2$ such that, for every $z\in D_0$,
\[k_{D_{0}}(z^*,z)\le C_1 \log \frac{\dist (z^*,\partial D_{0})}{\dist (z,\partial D_{0})}+C_2.\]
Let $z\in D'\cap \overline{\D}$ and set $w=g^{-1}(z)$ and $w^*=g^{-1}(z^*)$. Then $w\in D\backslash D(w_0,r)$ and $w^*\in\overline{D}(w_0,r)$. We have
\[2k_{D_{0}}(z^*,z)\ge  h_{D_{0}}(z^*,z)\ge  h_{D'}(z^*,z)= h_D(w^*,w);\]
see \cite[p.\ 476]{BeaPom} and \cite{Bea}. So, combining the relations above we infer that
\begin{equation}\label{ekt1}
h_D(w^*,w)\le  2C_1 \log \frac{\dist (z^*,\partial D_0)}{\dist (z,\partial D_0)}+2C_2.
\end{equation}
Since $z\in D'\cap \overline{\D}$, we have $\dist (z,\partial D_0)=\dist(z,\partial D')$. By (\ref{sphecho}) and Lemma \ref{coro} we derive that
\[\dist (z,\partial D_0)=\dist(z,\partial D')\ge \frac{1}{\pi} \dist_{\sigma}(z,\partial D')\ge \frac{c_1}{\pi} \dist_{\sigma}(w,\partial D),\]
for some constant $c_1>0$ depending on $r$ and $|w_0|$. This in conjunction with (\ref{ekt1}) and the fact that  $\dist (z^*,\partial D_0)\le |z^*|\le 4$ implies that
\begin{align}\label{sxx1}
h_D(w^*,w)&\le 2C_1\log \frac{4\pi}{c_1\dist_{\sigma}(w,\partial D)}+2C_2 \nonumber\\
&=2{C_1}\log \frac{1}{\dist_{\sigma}(w,\partial D)}+2{C_1}\log\frac{4\pi}{c_1}+2C_2,
\end{align}
for every $w\in D\backslash D(w_0,r)$.

\begin{figure}
	\begin{overpic}[width=11cm]{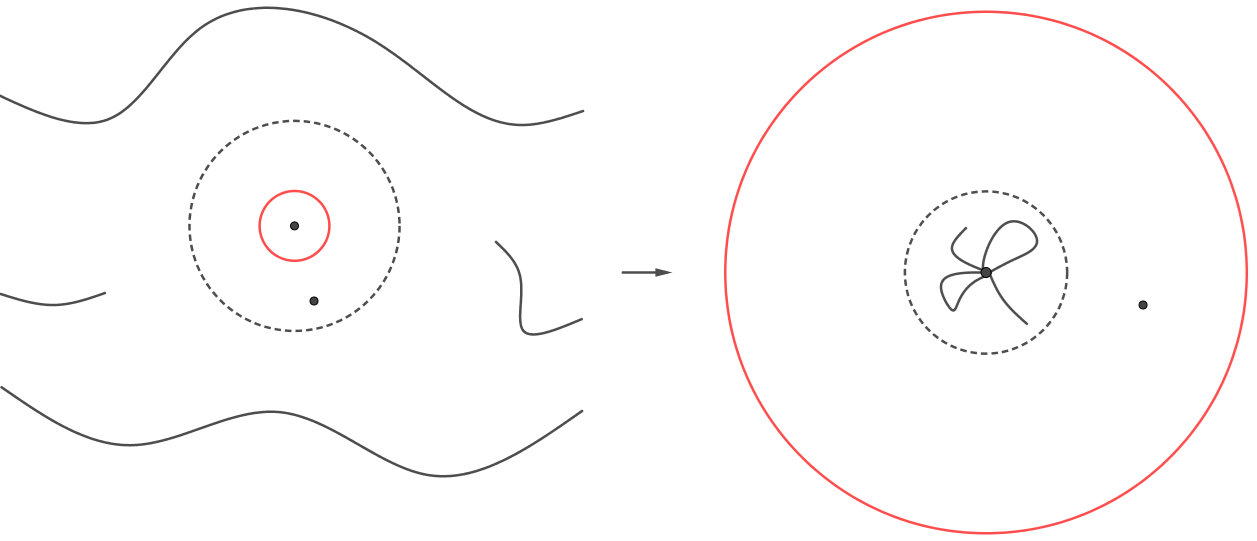}
		\put (26,19) {$w^*$}
		\put (22,26) {$w_0$}
		\put (51,24) {$g$}
		\put (77.5,18) {$0$}
		\put (92,18) {$z^*$}
		\put (8,12) {$D$}
		\put (80,3) {$D_0$}
		\put (95,1) {$D'$}
	\end{overpic}
	\caption{The domains $D,D'$ and $D_0$.}
	\label{fig1}	
\end{figure}

If $w\in D(w_0,r)$, let $\gamma$ be the line segment joining $w$ to $w^*$. Then we have
\[h_D(w^*,w)\le 2k_D(w^*,w)\le 2 \int_{\gamma}\frac{|d\zeta|}{\dist (\zeta,\partial D)}\le 4,\]
because $\dist (\zeta,\partial D)\ge r$ and $l(\gamma)\le 2r$. Moreover, by (\ref{sphecho})
\[\dist_{\sigma} (w,\partial D)\le \pi \dist ({w,\partial D})\le 3r\pi\]
and thus
\[\log \frac{1}{\dist_{\sigma} (w,\partial D)} \ge \log \frac{1}{3r\pi}.\]
Combining the relations above we infer that, for every $w\in D(w_0,r)$,
\begin{align}\label{sxx2}
h_D(w^*,w)&\le 4-\log \frac{1}{3r\pi}+\log \frac{1}{3r\pi} \nonumber\\
&\le 4+\log ({3r\pi})+ \log \frac{1}{\dist_{\sigma} (w,\partial D)}.
\end{align}

In conclusion, by (\ref{sxx1}) and (\ref{sxx2}) we derive that, for every $w\in D$,
\[h_D(w^*,w)\le C+\frac{1}{\alpha} \log \frac{1}{\dist_{\sigma} (w,\partial D)},\]
where $1/\alpha=1+2C_1$ and $C=4+\log ({3r\pi})+2{C_1}\log({4\pi}/{c_1})+2C_2$. Therefore, Theorem \ref{hyho} implies that $D$ is an unbounded H\"{o}lder domain.
\end{proof}

\begin{bibdiv}
\begin{biblist}

\bib{Bea}{article}{
	title={The hyperbolic metric and geometric function theory},
	author={A.~F. Beardon and D. Minda,},
	journal={Quasiconformal mappings and their applications},
	date={2007},
	pages={9--56}
}

\bib{BeaPom}{article}{
	title={The Poincar\'{e} metric of plane domains},
	author={A.~F. Beardon and Ch. Pommerenke,},
	journal={J. London Math. Soc.},
	volume={18},
	date={1978},
	number={2},
	pages={475--483}
}

\bib{BecPom}{article}{
	title={H\"{o}lder continuity of conformal mappings and non-quasiconformal Jordan curves},
	author={J. Becker and Ch. Pommerenke},
	journal={Comment. Math. Helvetici},
	volume={57},
	date={1982},
	pages={221--225}
}

\bib{Bur}{article}{
	title={Exit times of Brownian motion, harmonic majorization, and Hardy spaces},
	author={D. L. Burkholder},
	journal={Advances in Mathematics},
	volume={26},
	date={1977},
	pages={182--205}
}
\bib{Dur}{book}{
	title={Theory of $H^p$ Spaces},
	author={P.~L. Duren},
	date={1970},
	publisher={Academic Press},
	address={New York-London}
}

\bib{Gol}{book}{
	author={G.~M. Goluzin},
	title={Geometric theory of functions of a complex variable},
	series={Translations of Mathematical Monographs},
	volume={26},
	year={1969},
	publisher={American Mathematical Society, Providence, R.I.}
}

\bib{GS}{article}{
	title={Collet, {E}ckmann and {H}\"older},
	author={J. Graczyk and S. Smirnov},
	journal={Invent. Math.},
	volume={133},
	number={1},
	pages={69--96},
	year={1998},
}

\bib{Han}{article}{
	title={Hardy classes and ranges of functions},
	author={L.~J. Hansen},
	journal={Michigan Math. J.},
	volume={17},
	date={1970},
	pages={235--248}
}
\bib{Han2}{article}{
	title={The Hardy class of a spiral-like function},
	author={L.J. Hansen},
	journal={Michigan Math. J.},
	volume={18},
	date={1971},
	pages={279--282}
}
\bib{Jo}{article}{
	author={F. John},
	title={Rotation and strain},
	journal={Comm. Pure Appl. Math.},
	volume={14},
	year={1961},
	pages={391--413},
}

\bib{Kar}{article}{
	title={On the Hardy number of a domain in terms of harmonic measure and hyperbolic distance},
	author={C. Karafyllia},
	journal={Ark. Mat.},
	volume={58},
	date={2020},
	number={2},
	pages={307--331}
}
\bib{Karfin}{article}{
	title={On the Hardy number of comb domains},
	author={C. Karafyllia},
	journal={Ann. Fenn. Math.},
	volume={47},
	date={2022},
	pages={587-601}
}
\bib{Kim}{article}{
	title={Hardy spaces and unbounded quasidisks},
	author={Y.C. Kim and T. Sugawa},
	journal={Ann. Acad. Sci. Fenn. Math.},
	volume={36},
	date={2011},
	pages={291--300}
}

\bib{LR}{article}{
	author={P. Lin and S. Rohde},
	title={Conformal welding of dendrites},
	year={2018},
	pages={preprint}
}

\bib{MS}{article}{
	author={O. Martio and J. Sarvas},
	title={Injectivity theorems in the plane and space},
	journal={Ann. Acad. Sci. Fenn. Ser. A I Math.},
	number={4},
	year={1978},
	pages={383--401},
}

\bib{NP3}{article}{
author={R. N\"{a}kki and B. Palka}
title = {Lipschitz conditions, {$b$}-arcwise connectedness and conformal mappings},
journal={J. Analyse Math.},
volume={42},
year={1982/83},
pages={38--50}
}

\bib{NP2}{article}{
author={R. N\"{a}kki and B. Palka}
title = {Hyperbolic geometry and {H}\"{o}lder continuity of conformal  mappings},
journal={Ann. Acad. Sci. Fenn. Ser. A I Math.},
volume={10},
year={1985},
pages={433--444}
}

\bib{NP}{article}{
author={R. N\"{a}kki and B. Palka}
title = {Extremal length and {H}\"{o}lder continuity of conformal mappings},
journal={Comment. Math. Helv.},
volume={61},
number={3},
year={1986},
pages={389--414}
}

\bib{NV}{article}{
	author={R. N\"akki and J.\ V\"ais\"al\"a},
	title={John disks},
	journal={Expo. Math.},
	volume={9},
	number={1},
	year={1991},
	pages={3--43},
}
\bib{Co2}{article}{
	title={The Hardy class of K{\oe}nigs maps},
	author={P. Poggi-Corradini},
	journal={Michigan Math. J.},
	volume={44},
	date={1997},
	pages={495--507}
}

\bib{Pom}{book}{
	title={Boundary Behaviour of Conformal Maps},
	author={Ch. Pommerenke},
 series = {Grundlehren der mathematischen Wissenschaften [Fundamental
              Principles of Mathematical Sciences]},
    volume= {299},
 publisher = {Springer-Verlag, Berlin},
      year = {1992},
     pages = {x+300},
}

\bib{RS}{article}{
	author={S. Rohde and O. Schramm},
	title={Basic properties of {SLE}},
	journal={Ann. Math. (2)},
	volume={161},
	number={2},
	year={2005},
	pages={883--924}	
}

\bib{Smith1}{article}{
	title={H\"{o}lder domains and Poincar\'{e} domains},
	author={W. Smith and D.~A. Stegenga},
	journal={Trans. Amer. Math. Soc.},
	volume={319},
	date={1990},
	number={1},
	pages={67--100}
}
\bib{Smith2}{article}{
	title={A geometric characterization of H\"{o}lder domains},
	author={W. Smith and D.~A. Stegenga},
	journal={J. London Math. Soc.},
	volume={35},
	date={1987},
	number={2},
	pages={471--480}
}

\end{biblist}
\end{bibdiv}

\end{document}